\newcommand{\stsets}[1]{\mathbb{#1}}
\newcommand{\R}{\stsets{R}}
\newcommand{\M}{\stsets{M}}
\newcommand{\Z}{\stsets{Z}}
\theoremstyle{definition}
\newtheorem{definition}{Definition}
\theoremstyle{remark}
\newtheorem{remark}{Remark}
\newtheorem{example}{Example}
\newlength{\querylen}
\theoremstyle{plain}
\newtheorem{theorem}{Theorem}
\newtheorem{proposition}{Proposition}
\newtheorem{corollary}{Corollary}
\renewcommand{\P}{\mathbf{P}}
\DeclareMathOperator{\E}{{\bf E}}
\DeclareMathOperator{\supp}{supp}
\newcommand{\salg}{$\sigma$-algebra\ }
\newcommand{\zero}{\ensuremath{\mathbf{0}}}
\newcommand{\thru}{,\dotsc,}
\DeclareMathOperator{\bydef}{:=}
\renewcommand{\ln}{\log}
\renewcommand{\epsilon}{\varepsilon}
\renewcommand{\phi}{\varphi}
\newcommand{\ssp}{\hspace{0.5pt}}
\newcommand{\seg}{see, \hbox{e.\ssp g.,}\ }
\newcommand{\ie}{\hbox{i.\ssp e.}\ }
\newcommand{\pgf}{\hbox{p.\ssp g.\ssp f.}\ }
\newcommand{\pgfl}{\hbox{p.\ssp g.\ssp fl.}\ }
\newcommand{\sB}{\mathcal{B}}
\newcommand{\sM}{\mathcal{M}}
\newcommand{\sN}{\mathcal{N}}
\newcommand{\sF}{\mathcal{F}}
\newcommand{\sV}{\mathcal{V}}
\newcommand{\sX}{\mathcal{X}}
\newcommand{\sY}{\mathcal{Y}}
\newcommand{\EE}{\mathbb{E}}
\renewcommand{\SS}{\mathbb{S}}
\newcommand{\RR}{\mathbb{R}}
\newcommand{\ZZ}{\mathbb{Z}}
\newcommand{\sas}{\ensuremath{\mathrm{St\alpha S}}}
\newcommand{\das}{\ensuremath{\mathrm{D\alpha S}}}
\newcommand{\bmp}{\ensuremath{\mathrm{BM}_+(\sX)}}
\newcommand{\deq}{\overset{\mathcal{D}}{=}}
\newcommand{\Bin}{\ensuremath{\mathsf{Bin}}}
\newcommand{\Po}{\ensuremath{\mathsf{Po}}}
\newcommand{\Sib}{\ensuremath{\mathsf{Sib}}}
\begin{document}
\title{Branching-stable point processes}
\author{Giacomo Zanella\thanks{Department of Statistics, University of Warwick, Coventry CV4 7AL,
    U.K. Email: G.Zanella@warwick.ac.uk} 
\and Sergei Zuyev\thanks{Department of Mathematical Sciences, 
Chalmers University of Technology 
and University of Gothenburg,
412 96 Gothenburg, Sweden.
Email: sergei.zuyev@chalmers.se}}
\date{\today}
\maketitle
\begin{abstract}
  The notion of stability can be generalised to point processes by
  defining the scaling operation in a randomised way: scaling a
  configuration by $t$ corresponds to letting such a configuration
  evolve according to a Markov branching particle system for -$\log t$
  time.  We prove that these are the only stochastic operations
  satisfying basic associativity and distributivity properties and we
  thus introduce the notion of branching-stable point processes.  We
  characterise stable distributions with respect to local branching as
  thinning-stable point processes with multiplicities given by the
  quasi-stationary (or Yaglom) distribution of the branching process
  under consideration.  Finally we extend branching-stability to
  random variables with the help of continuous branching (CB)
  processes, and we show that, at least in some frameworks,
  $\mathcal{F}$-stable integer random variables are exactly Cox (doubly
  stochastic Poisson) random variables driven by corresponding
  CB-stable continuous random variables.

\noindent
   \textbf{Key words}: stable distribution, discrete stability, L\'evy measure, point process, Poisson process,
  Cox process, random measure, branching process, CB-process.
  
   \bigskip
   \noindent
   \textbf{AMS 2000 subject classification}: Primary 60E07;
   Secondary 60G55, 60J85, 60J68
\end{abstract}

\section{Introduction}
\label{sec:introduction}

The concept of stability is central in Probability theory: it
inevitably arises in various limit theorems involving scaled sums of
random elements. Recall that a random vector $\xi$ (more generally, a
random element in a Banach space) is called \emph{strictly
  $\alpha$-stable} or \sas, if
\begin{equation}\label{definition}
  t^{1/\alpha}\xi'+(1-t)^{1/\alpha}\xi''\deq\xi\qquad\text{for all}\ t\in [0,1],
\end{equation}\
where $\xi'$ and $\xi''$ are independent copies of $\xi$ and $\deq$
denotes equality in distribution. When a limiting distribution for the
sum of $n$ independent vectors scaled by $n^{1/\alpha}$ exist, it must
be \sas, since one can divide the sum into first $tn$ and the last
$(1-t)n$ terms which, in turn, also converge to the same law. This
simple observation gives rise to the defining
identity~(\ref{definition}). It is remarkable that the same argument
applies to random elements in much more general spaces where two
abstract operations are defined: a sum and a scaling by positive
numbers which should satisfy mild associativity, distributivity and continuity
conditions, \ie in a cone, see~\cite{DavMolZuy:08}. For instance, a
random measure $\xi$ on a general complete separable metric space is
called strictly $\alpha$-stable if identity~(\ref{definition}) is
satisfied, where the summation of measures and their multiplication by
a number are understood as the corresponding arithmetic operations on
the values of these measures on every measurable set.
Stable measures are the only class of measures which arise as a weak
limit of scaled sums of random measures.

Since the notion of stability relies on multiplication of a random
element by a number between 0 and 1, integer valued random variables
cannot be \sas. Therefore Steutel and van Harn in their pioneering
work \cite{ste:har79} defined a stochastic operation of \emph{discrete
  multiplication} on positive integer random variables and
characterised the corresponding \emph{discrete $\alpha$-stable} random
variables. In a more general context, the discrete multiplication
corresponds to the \emph{thinning operation} on point processes when a
positive integer random variable is regarded as a trivial point
process on a phase space consisting of one point (so it is just the
multiplicity of this point). This observation leads to the notion of
\emph{thinning stable} or discrete $\alpha$-stable point processes (notation:
\das) as the processes $\Phi$ which satisfy
\begin{equation}\label{das}
  t^{1/\alpha}\circ\Phi'+(1-t)^{1/\alpha}\circ\Phi''
  \deq\Phi\qquad\text{for all}\ t\in [0,1],
\end{equation}\
when multiplication by a $t\in[0,1]$ is replaced by the operation
$t\circ$ of \emph{independent thinning} of their points with the
retention probability $t$. The \das\ point processes are exactly the
processes appearing as a limit in the superposition-thinning schemes
(see \cite[Ch.~8.3]{Kal:83}) and their full characterisation was given
in~\cite{DavMolZuy:11}.

In its turn, a thinning could be thought of as a particular case of a
branching operation where a point either survives with
probability $t$ or is removed with the complimentary probability. This
observation leads to a new notion of discrete stability for point
processes by considering a more general branching operation based on a
subcritical Markov branching process $(Y_t)_{t>0}$ with generator
semigroup $\mathcal{F}=(F_t)_{t\geq 0}$, satisfying
$Y_0=1$. Following Steutel and Van Harn~\cite{SHV:82} who considered
the case of integer-valued random variables, we denote this operation
by $\circ_\mathcal{F}$. In this setting when a point process is
``multiplied" by a real number $t\in(0,1]$, every point is replaced by
a collection of points located in the same position of their
progenitor. The number of points in the collection is a random variable
distributed as $Y_{-\ln t}$. This operation preserves distributivity
and associativity with respect to superposition and generalises the
thinning operation. In Section~\ref{f_stab_pp} we study stable point
processes with respect to this branching operation $\circ_\mathcal{F}$
calling them $\mathcal{F}$-stable point processes. We show that
$\mathcal{F}$-stable point processes are essentially \das\ processes
with multiplicities which follow the limit distribution $Y_\infty$ of
the branching process $Y_t$ conditional on its survival (Yaglom
distribution) and we deduce their further properties.

In a broader context, given an abstract
associative and distributive stochastic operation $\bullet$ on point
processes, a process $\Phi$ is stable with respect to $\bullet$ if
and only if
\begin{displaymath}
  \forall n\in\mathbb{N}\hspace{1.5mm}\exists
  c_n\in[0,1]\hspace{1.5mm}:\hspace{1.5mm}\Phi \stackrel{\mathcal{D}}
  = c_n\bullet (\Phi^{(1)}+...+\Phi^{(n)}), 
\end{displaymath}
where $\Phi^{(1)},...,\Phi^{(n)}$ are independent copies of $\Phi$. In
such a context stable point processes arise inevitably in various
limiting schemes similar to the central limit theorem involving
superposition of point processes. In Section
\ref{sec:general_branching_stability} we study and characterise this
class of stochastic operations. We prove that a stochastic operation
on point processes satisfies associativity and distributivity if and
only if it presents a branching structure: ``multiplying" a
point process by $t$ is equivalent to let the process evolve for time
$-\log t$ according to some general Markov branching process which may
include diffusion or jumping of the points. We
characterise branching-stable (i.e.\ stable with respect to $\bullet$)
point processes for some specific choices of $\bullet$, pointing out
possible ways to obtain a characterisation for general branching
operations. In order to do so we introduce a stochastic
operation for continuous frameworks based on continuous-state branching Markov
processes and conjecture that branching stability of point processes
and continuous-branching stability of random measures should be
related in general: branching-stable point processes are Cox processes driven by
branching-stable random measures. 

\section{Preliminaries}\label{sec:preliminaries}
In this section we fix the notation and provide the necessary facts
about branching processes and point processes that we will use.
We then address the notion of discrete
stability for random variables and point processes that we generalise
in subsequent sections.

\subsection{Branching processes refresher}
\label{sec:brachn-proc-refr}
Here we present some results from \cite[Ch.III]{AN:72},
\cite[Ch.V]{Harris:63}, \cite{Lamperti:67} and \cite{SHV:82} about
continuous branching processes that we will need. Let
$\left(Y_s\right)_{s\geq 0}$ be a $\Z_+$-valued continuous-time Markov
branching process with $Y_0=1$ almost surely, where $\Z_+$ denotes the
set of non-negative integers.  Markov branching processes describe the
evolution of the total size of a collection of particles undergoing
the following dynamic: each particle, independently of the others,
lives for an exponential time (with fixed parameter) and then it
\emph{branches}, meaning that it is replaced by a random number of
offspring particles (according to a fixed probability distribution),
which then start to evolve independently.  Such a branching process is
governed by a family of probability generating functions (p.g.f.'s)
$\mathcal{F}= (F_s)_{s \geq 0}$, where $F_s$ is the p.g.f.\ of the
integer-valued random variable $Y_s$ for every $s\geq 0$.  It is
sufficient for us here to consider the domain of $F_s$ to be $[0,1]$.
It is well known that the family $\mathcal{F}$ is a composition
semigroup:
\begin{equation}\label{semigroup_property}\tag{C1}
  F_{s+t}(\cdot)=F_s\big(F_t(\cdot)\big)\quad\forall s,t\geq 0.
\end{equation}\
Conversely, by writing relation~\eqref{semigroup_property} explicitly
in terms of a power series, it is straightforward to see that the
system of p.g.f.'s corresponding to an non-negative integer valued
random variables describes a system of particles branching
independently with the same offspring distribution, so that $Y_s$ is
the number of particles at time $s$, see also~\cite[Ch.V.5]{Harris:63}.

We require that the branching process is \emph{subcritical}, i.e.\
$\mathbb{E}[Y_s]=F'_s(1)<1$ for $s>0$. Rescaling, if necessary, the time by a
constant factor, we may assume that
\begin{equation}\label{eq:subcriticality}\tag{C2}
  \mathbb{E}[Y_s]=F'_s(1)=e^{-s}.
\end{equation}\
Finally we require the following two regularity conditions to hold:
\begin{align}
  & \lim_{s\downarrow 0}F_s(z)=F_0(z)=z    &&0\leq z\leq 1,
                                              \label{first_condition_br_proc}\tag{C3}\\
  & \lim_{s\rightarrow\infty}F_s(z)=1   &&0\leq z\leq 1.
\label{second_condition_br_proc}\tag{C4}
\end{align}\
\eqref{first_condition_br_proc} implies that the process starts with a single particle $Y_0=1$
and \eqref{second_condition_br_proc} is a consequence of the subcriticality meaning that
eventually $Y_s=0$.

A rationale behind requiring \eqref{eq:subcriticality},
\eqref{first_condition_br_proc} and \eqref{second_condition_br_proc}
will become clear later, see Remark~\ref{reason_requirements}.
Identities (\ref{semigroup_property}) and
(\ref{first_condition_br_proc}) imply the continuity and differentiability of $F_s(z)$ with
respect to $s$, \seg~\cite[Sec.III.3]{AN:72}, and thus one can define
the \emph{generator of the semigroup} $\mathcal{F}$
\begin{displaymath}
 U(z)\bydef \frac{\partial}{\partial s}\Big| _{s=0}  F_s (z) \qquad 0\leq z\leq 1.
\end{displaymath}
The function $U(\cdot)$ is continuous and it can be used to define the
\textit{A-function} relative to the branching process
\begin{equation}\label{defi_a_function}
A(z)\bydef \exp \Big[-\int_0^z\frac{dx}{U(x)}\Big]\qquad 0\leq z\leq 1,
\end{equation}\
which is a continuous strictly decreasing function with $A(0)=1$ and
$A(1)=0$, \seg~\cite[Sec.III.8]{AN:72}. From \eqref{semigroup_property} it follows that
$U(F_s(z)) =U(z) F_s'(z)$ and therefore
\begin{equation}\label{A_property_1}
  A\big(F_s(z)\big)=e^{-s}A(z)\qquad s\geq 0,\hspace{1mm} 0\leq z\leq 1.
\end{equation}
\begin{definition}\label{defi:Y_infty}
  Let $\left(Y_s\right)_{s\geq 0}$ and $\mathcal{F}= (F_s)_{s \geq 0}$
  be as above. The \emph{limiting conditional distribution} (or Yaglom
  distribution) of $Y_s$ is the weak limit of the distributions of
  $(Y_s|Y_s>0)$ when $s\rightarrow+\infty$.
  We denote by $Y_\infty$ the corresponding random variable
  and by $B(\cdot)$ its p.g.f., called the \textit{B-function} of $Y_s$.
\end{definition}
The \textit{B-function} of $Y_s$ is given by
\begin{equation}\label{AB_relationship}
  B(z)\bydef 1-A(z)=\lim_{s\rightarrow
    +\infty}\frac{F_s(z)-F_s(0)}{1-F_s(0)}, \qquad 0\leq z\leq 1.
\end{equation}\
From \eqref{A_property_1} and \eqref{AB_relationship} it follows that
\begin{equation}\label{B_property_1}
  B\big(F_s(z)\big)=1-e^{-s}+e^{-s}B(z),\qquad s\geq 0,\hspace{1mm} 0\leq z\leq 1.
\end{equation}\

Both $A$ and $B$ are continuous, strictly monotone,
and surjective functions from $[0,1]$ to $[0,1]$, thus the inverse
functions $A^{-1}$ and $B^{-1}$ exist and have the same
properties. Moreover, using \eqref{A_property_1} we obtain
  \begin{displaymath}
    \frac{d}{ds} A(F_s(0))\Big|_{s=0}=\frac{d}{ds}\Big|_{s=0} e^{-s}=1.
  \end{displaymath}
  At the same time
  \begin{displaymath}
    \frac{d}{ds}\Big|_{s=0} A(F_s(0))=A'(0)\frac{d}{ds}\Big|_{s=0}
    F_s(0) = A'(0)\frac{d}{ds}\Big|_{s=0} \P\{Y(s)=0\}.
\end{displaymath}
Since $Y_s$ is a continuous Markov branching process, every particle
branches after exponentially distributed time with a non-null
probability to die out and it follows that
\begin{displaymath}
  \frac{d}{ds}\Big|_{s=0}\P\{Y_s=0\}>0
\end{displaymath}
implying also that
\begin{equation}\label{eq:Aprim}
  A'(0)=\Bigl[\frac{d}{ds}\Big|_{s=0}\P\{Y_s=0\}\Bigr]^{-1}\in (0,+\infty).
\end{equation}

The simplest, but important for the sequel example is provided by a
\emph{pure-death process}.
\begin{example}\label{example_thinning}
  Let $\left(Y_s\right)_{s\geq 0}$ be a continuous-time pure-death
  process starting with one individual
\begin{equation*}\label{defi_pure_death_process}
  Y_s=
  \begin{cases}
    1 & \text{if }s<\tau, \\ 
    0 & \text{if } s\geq\tau,
  \end{cases}
\end{equation*}\
where $\tau$ is an exponential random variable with parameter 1. The
composition semigroup $\mathcal{F}=\big(F_s\big)_{s\geq 0}$ driving
such a process is
\begin{equation}\label{defi_thinning_semigroup}
F_s(z)=1-e^{-s}+e^{-s}z\qquad 0\leq z\leq 1.
\end{equation}\
Clearly $\mathcal{F}=\big(F_s\big)_{s\geq 0}$ satisfies
\eqref{semigroup_property}--\eqref{second_condition_br_proc}.  The
generator $U(z)$ and the $A$ and $B$-functions defined above are
\begin{equation}\label{UAB_function_thinning}
U(z)=A(z)=1-z,\qquad B(z)=z, \qquad 0\leq z\leq 1.
\end{equation}
\end{example}

Another example is the \emph{birth and death process}.
 \begin{example}\label{ex:branching_geom}
   Given two positive parameters $\lambda$ and $\mu$, assume that each
   particle disappears from the system at rate $\mu$ or it is replaced
   with two particles at rate $\lambda$ independently of the
   others. The total number of particles at each time can either grow
   or diminish by one, hence this process is also called the
   \emph{linear branching}. Its generator is given by
   \begin{equation}
     \label{eq:1}
     U(z)=\mu-(\lambda+\mu)z+\lambda z^2.
   \end{equation}
   The process is subcritical whenever $\mu>\lambda$ and in order to
   satisfy \eqref{eq:subcriticality} one needs to scale the time so
   that $\mu=\lambda+1$. This defines a one-parametric family of semigroups
   \begin{equation*}
       F_s(z)=1-\frac{e^{-s}(1-z)}{1+\lambda(1-e^{-s})(1-z)},
     \end{equation*}\
   see \cite[p.~109]{AN:72}. Conditions
   (\ref{semigroup_property}),\eqref{first_condition_br_proc} and
   (\ref{second_condition_br_proc}) also hold and the functions $A$ and $B$
   are given by
   \begin{equation*}
     A(z)=\frac{(\lambda+1)(1-z)}{1+\lambda(1-z)},\qquad B(z)=\frac{z}{1+\lambda(1-z)},
   \end{equation*}
   for $z$ in $[0,1]$. Thus $B$ describes the p.g.f.\ of a (shifted) Geometric
   distribution with parameter $(1+\lambda)^{-1}$.
 \end{example}

\subsection{Point processes refresher}
\label{sec:point-proc-refr}
We now pass to the necessary definitions related to point processes.
The details can be found, for instance, in~\cite{DVJ1}, \cite{DVJ2}
and \cite{Kal:83}. A \emph{random measure} on a \emph{phase space}
$\sX$ which we assume to be a locally compact second countable
Hausdorff space, is a measurable mapping $\xi$ from some
probability space $(\Omega,\sF,\P)$ into the measurable space
$(\sM,\sB(\sM))$, where $\sM$ denote the set of all Radon measures on
the Borel \salg $\sB(\sX)$ of subsets of $\sX$ and $\sB(\sM)$ is the
minimal \salg that makes the mappings $\mu\mapsto \mu(B)$,\
$\mu\in\sM$ measurable for all $B\in \sB(\sX)$.

The distribution of a random measure is characterised by the
\emph{Laplace functional} $L_\xi[u]$ which is defined for the class
$\bmp$ of non-negative bounded functions $u$ with bounded support by
means of
\begin{equation}
  \label{eq:2}
  L_\xi[u]=\E \exp\bigl\{ - \langle u,\xi\rangle \bigr\},\quad u\in\bmp.
\end{equation}\
Here and below $\langle u, \mu\rangle$ stands for the integral $\int
u(x)\,\mu(dx)$ over the the whole $\sX$ unless
specified otherwise. 

A \emph{point process} (p.p.) $\Phi$ is a random \emph{counting}
measure, i.e.\ a random measure that with probability one takes values
in the set $\sN$ of all boundedly finite counting measures on
$\sB(\sX)$. The corresponding \salg $\sB(\sN)$ is the restriction
of $\sB(\sM)$ onto $\sN$. The \emph{support} of
$\phi\in\sN$ is the set $\supp \phi\bydef\{x\in\sX:\
\phi(\{x\})>0\}$. A point process is called \emph{simple} (or without
multiple points) if $\P\Phi^{-1}\{\phi\in\sN:\ \phi(\{x\})\leq 1\
\forall x\in\sX\}=1$. The distribution of a point process $\Phi$ can
be characterised by the probability generating functional (p.g.fl.)
$G_\Phi[h]$ defined for functions $h$ such that $0<h(x)\leq 1$ for all
$x\in\sX$ and such that the set $\{x\in\sX:\ h(x)\neq 1\}$ is compact.
We denote the class of such functions by $\sV(\sX)$. Then
\begin{equation*}
  G_\Phi[h]=L_\Phi[-\log h]=
  \E \exp\bigl\{ \langle \log h,\Phi\rangle\bigr\},\quad h\in\sV(\sX).
\end{equation*}\
For a simple p.p.\ $\Phi$, this expression simplifies to
\begin{displaymath}
  G_\Phi[h]=\E \prod_{x_i\in\supp \Phi} h(x_i),\quad h\in\sV(\sX).
\end{displaymath}

A \emph{Poisson point process} with \emph{intensity measure} $\Lambda$ is
the p.p.\ $\Pi$ having the p.g.fl.
\begin{equation*}
  G_\Pi[h]=\exp\bigl\{ -\langle 1- h,\Lambda\rangle\bigr\},\quad h\in\sV(\sX).
\end{equation*}\
It is characterised by the following property: given a family of disjoint
sets $B_i\in\sB(\sX)$,\ $i=1\thru n$, the counts $\Pi(B_1)\thru
\Pi(B_n)$ are mutually independent Poisson
$\Po(\Lambda(B_i))$ distributed random variables for $i=1\thru n$.

Given a random measure $\xi$, a \emph{Cox process} with
\emph{parameter measure} $\xi$ is the point process with the \pgfl
\begin{equation}\label{eq:pgfl_cox}
  G_\Phi[h]=\E\exp\bigl\{ -\langle 1- h,\xi\rangle\bigr\},\quad h\in\sV(\sX).
\end{equation}\
It is called \emph{doubly-stochastic}, since it can be constructed by first
taking a realisation $\xi(\omega)$ of the parameter measure and then taking
a realisation of a Poisson p.p.\ with intensity measure $\xi(\omega)$.

Consider a family of point processes $(\Psi_y)_{y\in\sY}$ on $\sX$
indexed by the elements of a locally compact and second countable
Hausdorff space $\sY$ which may or may not be
$\sX$ itself. Such a family is called a \emph{measurable family} if
$\P_y(A)\bydef \P(\Psi_y\in A)$ is a $\sB(\sY)$-measurable function
of $y$ for all $A\in\sB(\sN)$. 

Given a point process $\Xi$ on $\sY$ and a measurable family of point
processes $(\Psi_y)_{y\in\sY}$ on $\sX$, the \emph{cluster process} is
the following random measure:
\begin{equation}\label{eq:cldef}
  \Phi(\cdot)=\int_\sY \Psi_y(\cdot)\,\Xi(dy)
\end{equation}\
The p.p.\ $\Xi$ is then called the \emph{center} process and $\Psi_y,\
{y\in\sY}$ are called the \emph{component processes} or
\emph{clusters}.  The commonest model is when the clusters in
\eqref{eq:cldef} are independent for different $y_i\in\supp\Xi$ given
a realisation of $\Xi$. In this case, if $G_\Xi[h]$ is the p.g.fl.\ of
the center process and $G_\Psi[h|y]$ are the p.g.fl.'s of $\Psi_y$,
$y\in\sY$, then the p.g.fl.\ of the corresponding cluster process
\eqref{eq:cldef} is given by the composition
\begin{equation}\label{eq:clg}
  G_\Phi[h]=G_\Xi\bigl[G_\Psi[h|\,\cdot\,]\bigr].
\end{equation}\

\subsection{Stability for discrete random variables}\label{sec:das0}
Let $X$ be a $\mathbb{Z}_+$-valued random
variable. As in~\cite{ste:har79}, we define an operation of discrete
multiplication $\circ$ by a number $t\in[0,1]$
\begin{equation}\label{eq:dthin}
  t\circ X\stackrel{\mathcal{D}}=\sum_{i=1}^{X}Z^{(i)},
\end{equation}\
where $\{Z^{(i)}\}_{i\in\mathbb{N}}$ are independent and identically
distributed (i.i.d.)\ random variables with Bernoulli distribution
$\Bin(1,t)$. A random variable $X$ (or its distribution) is then called
\emph{discrete $\alpha$-stable} (notation: $\das$) if
\begin{equation}\label{das0}
  t^{1/\alpha}\circ X'+(1-t)^{1/\alpha}\circ X''
  \deq X\qquad\text{for all}\ t\in [0,1],
\end{equation}\
where $X',X''$ are independent distributional copies of $X$. 

Letting each point $i$ evolve as a pure-death process
$Y^{(i)}$ independently of the others (see
Example~\ref{example_thinning}), after time $-\ln t$, $t\in(0,1]$, the
number of surviving points will be distributed as~\eqref{eq:dthin}. So
alternatively,
\begin{displaymath}
  t\circ X\stackrel{\mathcal{D}}=\sum_{i=1}^{X}Y^{(i)}_{-\ln t}\,.
\end{displaymath}
Replacing here the pure-death with a general branching process allowed
the authors of~\cite{SHV:82} to define a more general \emph{branching}
operation and the corresponding \emph{$\sF$-stable} non-negative integer
random variables as follows.  Let $\{Y^{(i)}\}_{i\in\mathbb{N}}$ be
a sequence of i.i.d.\ continuous-time Markov branching processes
driven by a semigroup $\mathcal{F}= (F_s)_{s \geq 0}$ satisfying the
conditions (C1)-(C4) in the previous section. Given $t\in (0,1]$ and a
$\mathbb{Z}_+$-valued random variable $X$ (independent of
$\{Y^{(i)}\}_{i\in\mathbb{N}}$) define
\begin{equation}\label{defi_of_F_operation}
  t\circ_\mathcal{F}X\bydef\sum_{i=1}^X Y^{(i)}_{-\ln t}\,.
\end{equation}\
The notion of $\mathcal{F}$-stability is then defined in an analogous
way to discrete stability:
\begin{definition}\label{defi:f_stability_rv}
  A $\mathbb{Z}_+$-valued random variable $X$ (or its distribution) is called
  $\mathcal{F}$\emph{-stable with exponent} $\alpha$ if
  \begin{equation}\label{F_stable_rv_no_pgf}
    t^{1/\alpha}\circ_\mathcal{F}X'+(1-t)^{1/\alpha}\circ_\mathcal{F}X''
    \stackrel{\mathcal{D}}=X\qquad\forall t\in[0,1],
  \end{equation}\
  where $X'$ and $X''$ are independent copies of $X$.
\end{definition}
In terms of the p.g.f.\ $G_X(z)$ of $X$, \eqref{F_stable_rv_no_pgf} is equivalent to
\begin{equation*}
  G_X(z)=G_X\big(F_{-\alpha^{-1}\ln t}(z)\big)
  \cdot G_X\big(F_{-\alpha^{-1}\ln(1-t)}(z)\big)\qquad0\leq z\leq1.
\end{equation*}\

Let $G_{t\circ_\mathcal{F}X}(z)$ denote the p.g.f.\ of
$t\circ_\mathcal{F}X$. By independence of
$\{Y^{(i)}(\cdot)\}_{i\in\mathbb{N}}$ and $X$, (\ref{defi_of_F_operation})
is equivalent to
\begin{equation}\label{pgf_of_F_operation}
G_{t\circ_\mathcal{F}X}(z)=G_X\big(F_{-\ln t}(z)\big)\qquad0\leq z\leq1.
\end{equation}\
It is easy to verify that \eqref{semigroup_property} and
\eqref{pgf_of_F_operation} make the branching
operation $\circ_\mathcal{F}$ associative, commutative and
distributive with respect to the sum of random variables, i.e.\ for
all $t,t_1,t_2\in [0,1]$ and $X$ independent of $X'$
\begin{gather}
  t_1\circ_\mathcal{F}(t_2\circ_\mathcal{F}X)\stackrel{\mathcal{D}}
  =(t_1t_2)\circ_\mathcal{F}X\stackrel{\mathcal{D}}
  =t_2\circ_\mathcal{F}(t_1\circ_\mathcal{F}X), \label{eq:fass}\\
  t\circ_\mathcal{F}(X+X')\stackrel{\mathcal{D}}
  =t\circ_\mathcal{F}X + t\circ_\mathcal{F}X'.\label{eq:fcom}
\end{gather}\

\begin{remark}\label{reason_requirements} As shown in 
\cite[Section V.8, equations (8.6)-(8.8)]{SVH:04}, conditions \eqref{eq:subcriticality},
  \eqref{first_condition_br_proc} and \eqref{second_condition_br_proc}
  guarantee that $\circ_\mathcal{F}$ has some ``multiplication-like''
  properties. In particular \eqref{first_condition_br_proc} and
  \eqref{second_condition_br_proc} imply respectively that
  $\lim_{t\uparrow1}t\circ_\mathcal{F}X\stackrel{\mathcal{D}}
  =1\circ_\mathcal{F}X\stackrel{\mathcal{D}}=X$ and
  $\lim_{t\downarrow0}t\circ_\mathcal{F}X\stackrel{\mathcal{D}}=0$.
  Furthermore, \eqref{eq:subcriticality} implies that, in case the
  expectation of $X$ is finite, $\E[t\circ_\mathcal{F}X]=t\E X$. 
\end{remark}

The following theorem gives a characterisation of $\mathcal{F}$-stable
distributions on $\Z_+$, see \cite[Theorem~7.1]{SHV:82} and \cite[Theorem~V.8.6]{SVH:04}:
\begin{theorem}\label{th:charFstabrv}
  Let $X$ be a $\mathbb{Z}_+$-valued random variable and $G_X(z)$ its
  p.g.f., then $X$ is $\mathcal{F}$-stable with exponent $\alpha$ if
  and only if $0<\alpha\leq 1$ and
  \begin{displaymath}
    G_X(z)=\exp\big\{-cA(z)^{\alpha}\big\}\qquad0\leq z\leq1,
  \end{displaymath}
  where A is the A-function \eqref{defi_a_function} associated to the
  branching process driven by the semigroup $\mathcal{F}$ and $c>0$.
  In particular, $X$ is $\das$ if and only if 
  \begin{displaymath}
    G_X(z)=\exp\big\{-c(1-z)^{\alpha}\big\}\qquad0\leq z\leq1
  \end{displaymath}
  for some $0<\alpha\leq 1$ and $c>0$, see \cite[Theorem~3.2]{ste:har79}.
\end{theorem}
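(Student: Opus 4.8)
The plan is to linearise the $\mathcal{F}$-stability relation by means of the $A$-function and then reduce it to Cauchy's functional equation; the scaling identity \eqref{A_property_1}, $A(F_s(z))=e^{-s}A(z)$, is what makes this work. Throughout I use the p.g.f.\ form of \eqref{F_stable_rv_no_pgf}, namely $G_X(z)=G_X\big(F_{-\alpha^{-1}\log t}(z)\big)\,G_X\big(F_{-\alpha^{-1}\log(1-t)}(z)\big)$ for all $t\in[0,1]$, together with the decomposition $A=1-B$ from \eqref{AB_relationship}, where $B$ is the Yaglom p.g.f.

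For sufficiency, suppose $G_X=\exp\{-cA^\alpha\}$ with $c>0$ and $0<\alpha\le 1$. Verifying the displayed p.g.f.\ relation is then a one-line computation: \eqref{A_property_1} gives $A\big(F_{-\alpha^{-1}\log t}(z)\big)=t^{1/\alpha}A(z)$, so the right-hand side equals $\exp\{-c(t+(1-t))A(z)^\alpha\}=G_X(z)$. It remains to see that $\exp\{-cA^\alpha\}$ is a genuine p.g.f. Put $\Theta(w)=1-(1-w)^\alpha$; for $0<\alpha\le 1$ its $k$-th derivative is a nonnegative multiple of $(1-w)^{\alpha-k}$, so $\Theta$ is absolutely monotone with $\Theta(0)=0$, $\Theta(1)=1$, hence a p.g.f. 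Since $B$ is a p.g.f., so is the composition $H:=\Theta\circ B$, and because $A^\alpha=(1-B)^\alpha=1-H$ we obtain $G_X=\exp\{c(H-1)\}$, the p.g.f.\ of a compound-Poisson law with intensity $c$ and cluster p.g.f.\ $H$. Thus $X$ exists and, by the first computation, is $\mathcal{F}$-stable.

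For the form in the necessity direction, set $\psi=-\log G_X\ge 0$, so the p.g.f.\ relation becomes $\psi(z)=\psi\big(F_{-\alpha^{-1}\log t}(z)\big)+\psi\big(F_{-\alpha^{-1}\log(1-t)}(z)\big)$. As $A$ is a continuous strictly decreasing bijection of $[0,1]$, I substitute $w=A(z)$ and set $\Psi(w)=\psi(A^{-1}(w))$; by \eqref{A_property_1} the maps $z\mapsto F_s(z)$ become the dilations $w\mapsto e^{-s}w$, and the relation turns into $\Psi(w)=\Psi(t^{1/\alpha}w)+\Psi((1-t)^{1/\alpha}w)$ for all $w,t\in[0,1]$. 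Writing $v=w^\alpha$ and $M(v)=\Psi(v^{1/\alpha})$, this reads $M(v)=M(tv)+M((1-t)v)$, i.e.\ $M(v_1+v_2)=M(v_1)+M(v_2)$ whenever $v_1,v_2\ge 0$ and $v_1+v_2\le 1$. Since $G_X$ is nondecreasing and $A$ is decreasing, $M$ is nondecreasing with $M(0)=0$, and the only monotone solutions of Cauchy's equation are linear; hence $M(v)=cv$ and $G_X=\exp\{-cA^\alpha\}$ with $c\ge 0$ (the degenerate $c=0$ giving $X\equiv 0$, which I exclude).

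The remaining point, and the one I expect to be the real obstacle, is that $\alpha\le 1$. First I would observe that $\mathcal{F}$-stability forces $X$ to be infinitely divisible: iterating the two-term relation with equal shares $t_i=1/n$ and using the semigroup property \eqref{semigroup_property} yields $G_X(z)=\big(G_X(F_{\alpha^{-1}\log n}(z))\big)^n$, an $n$-th convolution power of a genuine p.g.f.\ (by \eqref{pgf_of_F_operation}) for every $n$. Consequently $G_X=\exp\{c(R-1)\}$ for some p.g.f.\ $R$ with $R(0)=0$, and comparison with $\exp\{-cA^\alpha\}$ forces $R=1-A^\alpha$ to itself be a p.g.f. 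Now $A^\alpha=1-R$ is decreasing on $[0,1]$ with all derivatives nonpositive on $[0,1)$, hence concave. When the Yaglom mean $B'(1)$ is finite, $(A^\alpha)'(1)=\alpha A(1)^{\alpha-1}A'(1)=0$ for $\alpha>1$; but a concave function that is strictly decreasing on $[0,1)$ cannot have vanishing derivative at the right endpoint, a contradiction, so $\alpha\le 1$. The delicate case is $B'(1)=\infty$: then $(A^\alpha)'(1)$ is an indeterminate $0\cdot\infty$, the low-order Taylor coefficients of $1-A^\alpha$ need not expose the obstruction, and one must instead analyse the behaviour as $z\uparrow 1$ (or appeal to the absolute-monotonicity argument of \cite[Theorem~7.1]{SHV:82}). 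Everything else is routine bookkeeping around the substitution $w=A(z)$.
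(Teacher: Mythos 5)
This theorem is not proved in the paper at all: it is quoted from \cite[Theorem~7.1]{SHV:82} and \cite[Theorem~V.8.6]{SVH:04}, so there is no in-paper argument to compare yours against, and your proof must stand on its own. Most of it does. The sufficiency direction (stability via \eqref{A_property_1}, existence via the compound-Poisson form $\exp\{c(H-1)\}$ with $H=\Theta\circ B$ and $\Theta(w)=1-(1-w)^\alpha$ the Sibuya p.g.f.) is correct and complete, and your reduction of the stability equation to Cauchy's functional equation through the substitutions $w=A(z)$, $v=w^\alpha$ is sound; monotonicity of $M$ legitimately replaces any continuity hypothesis. The one small omission there is that you need $\psi(0)=-\log G_X(0)<\infty$: this holds because $F_s(0)>0$ for $s>0$ and a p.g.f.\ vanishing at a positive argument would vanish identically, so the stability equation at $z=0$ gives $G_X(0)>0$.

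The genuine gap is the one you flag yourself: the bound $\alpha\le1$ when $B'(1)=\infty$. You cannot close it by ``appealing to the absolute-monotonicity argument of \cite[Theorem~7.1]{SHV:82}''---that is precisely the theorem being proved here, so the appeal is circular. The gap is, however, closable inside your own setup: the obstruction sits in the \emph{second} derivative of the cluster p.g.f.\ $R=1-A^\alpha$ near $z=1$, not in the value of $(A^\alpha)'$ at $z=1$. From \eqref{defi_a_function} one has $A'=-A/U$, hence
\begin{displaymath}
  R''(z)\;=\;-\,(A^\alpha)''(z)\;=\;-\,\frac{\alpha A(z)^\alpha}{U(z)^2}\,\bigl(\alpha+U'(z)\bigr),
  \qquad 0\le z<1 .
\end{displaymath}
Since $R$ is a p.g.f., $R''\ge0$ on $[0,1)$, which forces $\alpha\le-U'(z)$ for every $z\in[0,1)$. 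Writing the generator as $U(z)=a\bigl(f(z)-z\bigr)$, with $a$ the branching rate and $f$ the offspring p.g.f., condition \eqref{eq:subcriticality} forces $f'(1)<\infty$ and $a\bigl(f'(1)-1\bigr)=-1$; since $f'(z)\uparrow f'(1)$ as $z\uparrow1$, we get $-U'(z)=a\bigl(1-f'(z)\bigr)\downarrow1$. Hence $\alpha\le\inf_{0\le z<1}\bigl(-U'(z)\bigr)=1$, with no case distinction on the Yaglom mean. (Equivalently: if $\alpha>1$, pick $z$ close to $1$ with $U'(z)>-\alpha$; then $R''(z)<0$, contradicting absolute monotonicity of $R$.) Replacing your endpoint-derivative argument by this computation makes the proof complete and self-contained.
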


\subsection{Thinning stable point processes}\label{sec:das_pp}
It was noted in Section~\ref{sec:das0} that the operation of discrete
multiplication of an integer random variable $t\circ$ with $t\in[0,1]$
may be thought of as an independent thinning when the random
variable is represented as a collection of points and each point is
retained with probability $t$ and removed with the complementary
probability. Thus the thinning operation generalises the discrete
multiplication to general point processes. The corresponding
\emph{thinning-stable} or \emph{discrete $\alpha$-stable} point processes
(notation: \das) satisfy~\eqref{das} and are
exactly the ones which appear as the limit in thinning-superposition
schemes, see~\cite[Ch.8.3]{Kal:83}. The full characterisation of these
processes is given in~\cite{DavMolZuy:11}. Thinning
stable processes exist only for $\alpha\in(0,1]$, and the case $\alpha=1$
corresponds to the Poisson processes.

To be more specific, we need some further definitions. First we need
a way to consistently normalize both finite and infinite measures. Let
$B_1,B_2,\ldots$ be a fixed countable base of the topology on $\sX$ that
consists of relatively compact sets.  Append $B_0=\sX$ to this base.
For each non-null $\mu\in\sM$ consider the sequence of its values
$(\mu(B_0),\mu(B_1),\mu(B_2),\ldots)$ possibly starting with infinity,
but otherwise finite. Let $i(\mu)$ be the smallest non-negative
integer $i$ for which $0<\mu(B_i)<\infty$, in particular, $i(\mu)=0$ if $\mu$ is
a finite measure. Define
\begin{equation*}
  \SS=\{\mu\in\sM:\; \mu(B_{i(\mu)})=1\}.
\end{equation*}\
It can be shown (see \cite{DavMolZuy:11}) that $\SS$ is
$\sB(\sM)$-measurable and that $\SS\cap \{\mu:\
\mu(\sX)<\infty\}=\M_1$ is the family of all probability measures on
$\sX$. Furthermore, every
$\mu\in\sM\setminus\{0\}$ can be uniquely associated with the pair
$(\hat{\mu},\mu(B_{i(\mu)}))\in \SS\times \R_+$, where $\hat{\mu}$
is defined as $\frac{\mu}{\mu(B_{i(\mu)})}$\,, and 
$\mu=\mu(B_{i(\mu)})\hat{\mu}$ is the \emph{polar representation} of $\mu$.

A locally finite random measure $\xi$ is called \emph{strictly stable
  with exponent} $\alpha$ or \sas\ if it satisfies identity
\eqref{definition}. It is deterministic in the case $\alpha=1$ and in
the case $\alpha\in(0,1)$ its Laplace functional is given by
 \begin{equation}
   \label{eq:lf}
   L_\xi[h]=\exp\Bigl\{-\int_{\sM\setminus\{0\}}
   (1-e^{-\langle h,\mu\rangle}) \Lambda(d\mu)\Bigr\}\,,
   \quad h\in\bmp\,,
 \end{equation}
 where $\Lambda$ is a \emph{L\'evy measure}, i.e.\ a Radon measure on
 $\sM\setminus\{0\}$ such that
 \begin{equation}
   \label{eq:lmreg}
   \int_{\sM\setminus\{0\}}(1-e^{-\langle h,\mu\rangle})
   \Lambda(d\mu)<\infty
 \end{equation}\
 for all $h\in\bmp$. Such $\Lambda$ is homogeneous of order $-\alpha$, i.e.\
 $\Lambda(tA)=t^{-\alpha}\Lambda(A)$ for all measurable
 $A\subset\sM\setminus\{0\}$ and $t>0$, see \cite[Th.~2]{DavMolZuy:11}.

Introduce a \emph{spectral measure} $\sigma$ supported by $\SS$ by setting
\begin{displaymath}
 \sigma (A)=\Gamma(1-\alpha)\,\Lambda(\{t\mu:\; \mu\in A,\; t\geq1\})
\end{displaymath}
for all measurable $A\subset\SS$, where $\Gamma$ is the Euler's
Gamma-function. Integrating out the radial component in \eqref{eq:lf}
leads to the following alternative representation \cite[Th.~3]{DavMolZuy:11}:
  \begin{equation}
    \label{eq:sd}
    L_\xi[u]=\exp\Bigl\{-\int_{\SS}
    \langle u,\mu\rangle^\alpha \sigma(d\mu)\Bigr\}\,,
    \quad u\in\bmp
  \end{equation}\
for some \emph{spectral measure} $\sigma$ supported by $\SS$ which satisfies
\begin{equation}
  \label{eq:s-fin}
  \int_{\SS} \mu(B)^\alpha \sigma(d\mu)<\infty
\end{equation}\
for all relatively compact subsets $B$ of $\sX$. The latter is a
consequence of \eqref{eq:lmreg} and representation \eqref{eq:sd}
is unique.

The importance of \sas\ random measures is explained by the fact that
any \das\ point process $\Phi$ is exactly a Cox processes driven by a \sas\
parameter measure $\xi$: its \pgfl\ has the form
\begin{equation}
  \label{eq:pg-pp-spec}
  G_{\Phi}[h]=L_\xi[1-h]=\exp\Bigl\{-\int_{\SS}
  \langle 1-h,\mu\rangle^\alpha \sigma(d\mu)\Bigr\}\,,
  \quad h\in\sV(\sX)
\end{equation}\
for some locally finite spectral measure $\sigma$ on $\SS$ that
satisfies (\ref{eq:s-fin}), see \cite[Th.~15 and Cor.~16]{DavMolZuy:11}.

In the case when $\sigma$ charges only probability measures $\M_1$,
the corresponding \das\ p.p.'s are cluster processes. Recall that a
positive integer random variable $\eta$ has \emph{Sibuya} $\Sib(\alpha)$ distribution with
parameter $\alpha$, if its \pgf\ is given by
\begin{displaymath}
  \E z^\eta=1-(1-z)^\alpha\,,\quad z\in(0,1]\,. 
\end{displaymath}
It corresponds to the number of
trials to get the first success in a series of Bernoulli trials with
probability of success in the $k$th trial being $\alpha/k$.
\begin{definition}(See \cite[Def.23]{DavMolZuy:11})
  Let $\mu$ be a probability measure on $\sX$. A point process
  $\Upsilon$ on $\sX$ defined by the \pgfl
  \begin{equation}
    \label{eq:sibpgfl}
    G_\Upsilon[h]=G_{\Upsilon(\mu)}[h]=1-\langle
    1-h,\mu\rangle^\alpha,  \quad h\in\sV(\sX),
  \end{equation}\
  is called a \emph{Sibuya point process} with \emph{exponent}
  $\alpha$ and \emph{parameter measure} $\mu$. Its distribution is
  denoted by $\Sib(\alpha,\mu)$.
\end{definition}
A Sibuya process $\Upsilon\sim\Sib(\alpha,\mu)$ is a.s.\ finite, the
total number of its points $\Upsilon(\sX)$ follows $\Sib(\alpha)$
distribution and, given the total number of points, these points are
independently identically distributed in $\sX$ according to $\mu$.
\begin{theorem}[Th.~24 \cite{DavMolZuy:11}] 
    \label{th:pois-sib}
  A $\das$ point process $\Phi$ with a spectral measure $\sigma$
  supported by $\M_1$ can be represented as a cluster process with
  Poisson centre process on $\M_1$ driven by intensity measure
  $\sigma$ and component processes being Sibuya processes
  $\Sib(\alpha,\mu),\ \mu\in\M_1$. Its \pgfl is given by
  \begin{equation*}
    G_\Phi[h]=\exp\Bigl\{\int_{\M_1}
    (G_{\Upsilon(\mu)}[h]-1)\, \sigma(d\mu)\Bigr\}, \quad h\in\sV(\sX),
  \end{equation*}\
  with $G_{\Upsilon(\mu)}[h]$ as in~\eqref{eq:sibpgfl}. 

\end{theorem}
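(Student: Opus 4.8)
The plan is to compute the probability generating functional of the proposed cluster process and match it with the \pgfl of $\Phi$ already recorded in~\eqref{eq:pg-pp-spec}, then to invoke uniqueness of the \pgfl to conclude that the two processes share the same law. Since the spectral measure $\sigma$ is supported by $\M_1$, formula~\eqref{eq:pg-pp-spec} specialises to
\[
  G_\Phi[h]=\exp\Bigl\{-\int_{\M_1}\langle 1-h,\mu\rangle^\alpha\,\sigma(d\mu)\Bigr\},\qquad h\in\sV(\sX),
\]
so everything reduces to realising this functional as a Poisson superposition of Sibuya clusters.

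First I would confirm the \pgfl of a single cluster. By the description accompanying~\eqref{eq:sibpgfl}, $\Upsilon\sim\Sib(\alpha,\mu)$ places $N\sim\Sib(\alpha)$ points i.i.d.\ according to $\mu$; conditioning on $N$ and using $\langle 1,\mu\rangle=1$ gives
\[
  G_{\Upsilon(\mu)}[h]=\E\bigl[\langle h,\mu\rangle^{N}\bigr]=1-\bigl(1-\langle h,\mu\rangle\bigr)^\alpha=1-\langle 1-h,\mu\rangle^\alpha,
\]
in agreement with~\eqref{eq:sibpgfl}. Next I would take $\Xi$ to be a Poisson process on $\M_1$ with intensity $\sigma$ and, conditionally on $\Xi$, attach to each atom $\mu\in\supp\Xi$ an independent cluster $\Upsilon(\mu)\sim\Sib(\alpha,\mu)$. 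Feeding $G_{\Upsilon(\,\cdot\,)}[h]$ into the Poisson \pgfl via the composition formula~\eqref{eq:clg} then yields
\[
  G_\Xi\bigl[G_{\Upsilon(\,\cdot\,)}[h]\bigr]=\exp\Bigl\{-\int_{\M_1}\bigl(1-G_{\Upsilon(\mu)}[h]\bigr)\,\sigma(d\mu)\Bigr\}=\exp\Bigl\{-\int_{\M_1}\langle 1-h,\mu\rangle^\alpha\,\sigma(d\mu)\Bigr\},
\]
which is precisely $G_\Phi[h]$ and, rewritten, the asserted expression $\exp\{\int_{\M_1}(G_{\Upsilon(\mu)}[h]-1)\,\sigma(d\mu)\}$.

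The step I expect to be the main obstacle is establishing that this Poisson cluster process is a bona fide \emph{locally finite} point process, which is what makes the composition formula~\eqref{eq:clg} legitimate. The subtlety is that $\sigma$ may be infinite and, for $\alpha<1$, a Sibuya cluster has infinite expected size (indeed $G_N'(z)=\alpha(1-z)^{\alpha-1}\to\infty$ as $z\uparrow1$), so a first-moment Campbell bound on $\Phi(B)$ diverges and says nothing. I would therefore argue at the level of generating functions instead: for a relatively compact $B$ and $s\in(0,1)$, substituting $h=1-(1-s)\one_B$ into the functional above produces
\[
  \E\bigl[s^{\Phi(B)}\bigr]=\exp\Bigl\{-(1-s)^\alpha\int_{\M_1}\mu(B)^\alpha\,\sigma(d\mu)\Bigr\}.
\]
Condition~\eqref{eq:s-fin} makes the exponent finite, and letting $s\uparrow1$ gives $\Prob\{\Phi(B)<\infty\}=1$ by monotone convergence; the same finiteness shows the integral in the Poisson composition converges for every $h\in\sV(\sX)$, so the computation of the previous paragraph is justified. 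Once well-definedness is secured, uniqueness of the \pgfl identifies $\Phi$ with the constructed cluster process and simultaneously delivers the stated functional, completing the argument.
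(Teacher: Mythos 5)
Your proposal is correct and takes essentially the same route as the paper: the paper's argument (here quoted from its source, and reproduced in the proof of the generalisation to $\mathcal{F}$-stable processes, which reduces to this statement when $B(z)=z$) likewise specialises the spectral representation \eqref{eq:pg-pp-spec} to $\M_1$, rewrites the integrand via the Sibuya \pgfl\ \eqref{eq:sibpgfl}, and recognises the result as a Poisson cluster \pgfl\ through the composition formula \eqref{eq:clg}. Your additional generating-function verification of local finiteness (correctly noting that a first-moment Campbell bound is useless since a Sibuya cluster has infinite mean for $\alpha<1$) is a sound refinement of a point the paper leaves implicit, not a different approach.
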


\section{$\mathcal{F}$-stability for point processes}\label{f_stab_pp}

We have seen in the previous section that the discrete multiplication
operation on integer random variables generalises to the thinning
operation on points processes. In a similar fashion, we can extend the
branching operation $\circ_\mathcal{F}$ to point processes too.

Let $\{Y_s\}_{s\geq 0}$ be a continuous-time Markov branching process
driven by a semigroup $\mathcal{F}=(F_s)_{s\geq 0}$ satisfying
conditions
\eqref{semigroup_property}-\eqref{second_condition_br_proc}. Intuitively,
given a point process $\Phi$ and $t\in(0,1]$, $t\circ_\mathcal{F}\Phi$
is the cluster point process obtained from $\Phi$ by replacing every
point with $Y_{-\ln t}$ points located in the same position (using an
independent copy of $Y_{-\ln t}$ for each point including the ones in
the same position). In this sense, the resulting process is a cluster
process. To proceed formally, we first define its component
processes.
\begin{definition}\label{defi:Y_x} Given a
  $\mathbb{Z}_+$-valued random variable $Z$ and
  $x\in\mathcal{X}$, we denote by $Z_x$ the point process having $Z$
  points in $x$ and no points in $\mathcal{X}\backslash\{x\}$.
  Equivalently $Z_x$ is the point process with \pgfl
  $G_{Z_x}[h]=F\big(h(x)\big)$ for each
  $h\in\mathcal{V}(\mathcal{X})$, where $F(z)$ is the \pgf of $Z$.
\end{definition}

We can now define the operation $\circ_\mathcal{F}$ for point
processes. 

\begin{definition} Let $\Phi$ be a point process and
  $\{Y_s\}_{s\geq 0}$ be a continuous-time Markov branching process
  driven by a semigroup $\mathcal{F}=(F_s)_{s\geq 0}$ satisfying
  conditions
  \eqref{semigroup_property}-\eqref{second_condition_br_proc}. For
  each $t\in(0,1]$, $t\circ_\mathcal{F}\Phi$ is the (independent)
  cluster point process with center process $\Phi$ and clusters
  $\big\{(Y_{-\ln t})_x,\ x\in \mathcal{X}\big\}$. 
\end{definition}

Equivalently $t\circ_\mathcal{F}\Phi$ can be defined as the point
process with \pgfl given by
\begin{equation*}
  G_{t\circ_\mathcal{F}\Phi}[h]=G_\Phi[F_{-\ln t}(h)],
\end{equation*}\ 
where $G_\Phi$ is the \pgfl of $\Phi$. Note that
$t\circ_\mathcal{F}\Phi$ does not need to be simple (i.e.\ it can have
multiple points), even if $\Phi$ is. We define the
$\mathcal{F}$-stability for point processes as follows. 
\begin{definition} A
  p.p.\ $\Phi$ is $\mathcal{F}$-stable with exponent $\alpha$
  ($\alpha$-stable with respect to $\circ_\mathcal{F}$) if
  \begin{equation}\label{defi_fstable_pp}
    t^{1/\alpha}\circ_\mathcal{F}\Phi'+(1-t)^{1/\alpha}\circ_\mathcal{F}\Phi''
    \deq \Phi\qquad\forall
    t\in(0,1],
  \end{equation}\
where  $\Phi'$ and $\Phi''$ are independent copies of $\Phi$.
\end{definition}
Equivalently, (\ref{defi_fstable_pp}) can be rewritten in terms of
p.g.fl.'s\ as follows:
\begin{equation*}
  G_\Phi[h]=G_\Phi\big[F_{-\ln t/\alpha}(h)\big]\,
  G_\Phi\big[F_{-\ln(1-t)/\alpha}(h)\big]\qquad\forall
  t\in(0,1],\forall h\in\mathcal{V}(\mathcal{X}). 
\end{equation*}\

\begin{remark}\label{parallel_with_thinning_pp}
  The branching operation $\circ_\mathcal{F}$ induced by the
  pure-death process of Example~\ref{example_thinning} corresponds to
  the thinning operation. 
Therefore \das\ point processes can be seen as a special case of
$\mathcal{F}$-stable point processes.
\end{remark}

An $\mathcal{F}$-stable point process $\Phi$ is necessarily infinitely divisible.
Indeed, iterating \eqref{defi_fstable_pp} $m-1$ times we obtain
\begin{equation}\label{F_stable_natural}
  m^{-1/\alpha}\circ_\mathcal{F}\Phi^{(1)}+...+m^{-1/\alpha}
  \circ_\mathcal{F}\Phi^{(m)}\deq \Phi,
\end{equation}\
where $\Phi^{(1)},...,\Phi^{(m)}$ are independent copies of $\Phi$. 

A characterisation of $\mathcal{F}$-stable point processes is given in
the following theorem which generalises \cite[Th.15]{DavMolZuy:11} and
which proof we largely follow here.
\begin{theorem}\label{teo15}
  A functional $G_\Phi [\cdot]$ is the \pgfl of an
  $\mathcal{F}$-stable point process $\Phi$ with exponent of stability
  $\alpha$ if and only if $0<\alpha\leq 1$ and there exists a \sas\ random measure
  $\xi$ such that
\begin{equation}\label{pgfl_fstable}
  G_\Phi [h]=L_{\xi}\big[A(h)\big]
  =L_{\xi}\big[1-B(h)\big]\qquad\forall h\in\sV,
\end{equation}\
where $A(z)$ and $B(z)$ are the $A$-function and $B$-function of the
branching process driven by $\mathcal{F}$.
\end{theorem}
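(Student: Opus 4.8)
The plan is to treat the two displayed forms in \eqref{pgfl_fstable} as equivalent from the outset: since $B=1-A$ by \eqref{AB_relationship}, one has $A(h)=1-B(h)$ pointwise, so $L_\xi[A(h)]=L_\xi[1-B(h)]$ and it suffices to establish the single identity $G_\Phi[h]=L_\xi[A(h)]$. The whole argument rests on the intertwining relation \eqref{A_property_1}, $A(F_s(z))=e^{-s}A(z)$, which says that the $A$-function conjugates the nonlinear branching semigroup $(F_s)$ into the linear scaling $z\mapsto e^{-s}z$; everything else is transport of structure along $A$.

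For sufficiency, I would assume $0<\alpha\le1$ and that $\xi$ is \sas\ with $G_\Phi[h]=L_\xi[A(h)]$. First note $A(h)\in\bmp$ for $h\in\sV$: indeed $A$ is continuous with $A(1)=0$, so $A(h)$ is bounded, non-negative, and vanishes off the compact set $\{h\ne1\}$. Rewriting \eqref{defi_fstable_pp} in p.g.fl.\ form and applying \eqref{A_property_1} with $s=-\alpha^{-1}\log t$ (so that $e^{-s}=t^{1/\alpha}$) gives $G_\Phi[F_{-\log t/\alpha}(h)]=L_\xi[A(F_{-\log t/\alpha}(h))]=L_\xi[t^{1/\alpha}A(h)]$, and similarly for $1-t$. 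The \sas\ property of $\xi$, written in Laplace form as $L_\xi[u]=L_\xi[t^{1/\alpha}u]\,L_\xi[(1-t)^{1/\alpha}u]$ and evaluated at $u=A(h)$, then closes the required identity.

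For necessity, I would use that $A\colon[0,1]\to[0,1]$ is a continuous strictly decreasing bijection, so $A^{-1}$ exists with the same properties, and inverting \eqref{A_property_1} yields $F_s(A^{-1}(w))=A^{-1}(e^{-s}w)$. Define a candidate Laplace functional by $L_\xi[u]:=G_\Phi[A^{-1}(u)]$ for $u$ taking values in $[0,1)$ (for which $A^{-1}(u)\in\sV$). Substituting $h=A^{-1}(u)$ into the p.g.fl.\ form of \eqref{defi_fstable_pp} and using $F_{-\log t/\alpha}(A^{-1}(u))=A^{-1}(t^{1/\alpha}u)$ turns the stability identity into the functional equation $L_\xi[u]=L_\xi[t^{1/\alpha}u]\,L_\xi[(1-t)^{1/\alpha}u]$, which is precisely strict $\alpha$-stability of $\xi$ in Laplace form; the same transport gives $G_\Phi[h]=L_\xi[A(h)]$ on all of $\sV$.

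The main obstacle is to show that the functional $u\mapsto G_\Phi[A^{-1}(u)]$, so far defined only on $[0,1)$-valued $u$, is the restriction of a genuine Laplace functional of a locally finite non-negative random measure, and then that this measure is \sas. Here I would follow the thinning case \cite[Th.~15]{DavMolZuy:11} with $1-h$ replaced by $A(h)$: iterating \eqref{defi_fstable_pp} shows $\Phi$ is infinitely divisible, as already recorded in \eqref{F_stable_natural}, so $-\log G_\Phi$ admits a canonical (KLM) integral representation over $\sN\setminus\{0\}$; transporting this representation through $A^{-1}$ and using the scaling relation above identifies it with the L\'evy-measure form \eqref{eq:lf} of a \sas\ random measure, the homogeneity of order $-\alpha$ of the L\'evy measure being forced by the functional equation. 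Positivity of $\xi$ restricts the admissible exponents to $0<\alpha\le1$, since non-negative strictly stable laws exist only in this range, and uniqueness of $\xi$ follows from the uniqueness of the spectral representation \eqref{eq:sd}.
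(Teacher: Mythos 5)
Your sufficiency argument is essentially the paper's: apply \eqref{A_property_1} with $e^{-s}=t^{1/\alpha}$ and then the \sas\ identity for $L_\xi$ at $u=A(h)$. The one omission is that you assume from the outset that a point process $\Phi$ with $G_\Phi[h]=L_\xi[A(h)]$ exists, whereas in this direction the theorem requires showing that this functional \emph{is} a \pgfl\ of some point process. The paper gets this from the cluster construction: $L_\xi[1-h]$ is the \pgfl\ of a Cox process with parameter measure $\xi$, and $B=1-A$ is the p.g.f.\ of the Yaglom variable $Y_\infty$, so $L_\xi[1-B(h)]$ is the \pgfl\ of a cluster process by \eqref{eq:clg}. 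This is easy to add.

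The genuine gap is in necessity, exactly at the point you call ``the main obstacle''. Your proposed resolution --- take the KLM representation of the infinitely divisible process $\Phi$ and ``transport it through $A^{-1}$'' to land on the L\'evy form \eqref{eq:lf} --- does not work as described. The KLM representation is an integral over counting measures, $-\log G_\Phi[h]=\int_{\sN\setminus\{\zero\}}\bigl(1-e^{\langle\log h,\varphi\rangle}\bigr)\tilde Q(d\varphi)$; substituting $h=A^{-1}(u)$ produces the integrand $1-\exp\bigl\{\langle\log A^{-1}(u),\varphi\rangle\bigr\}$, and since $\log A^{-1}(\cdot)$ is nonlinear this is \emph{not} of the form $1-e^{-\langle u,\mu\rangle}$ for any measure $\mu$ attached to $\varphi$; moreover $\tilde Q$ lives on $\sN\setminus\{\zero\}$, not on $\sM\setminus\{0\}$. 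Converting $\tilde Q$ into a L\'evy measure $\Lambda$ on $\sM\setminus\{0\}$ --- that is, showing $\tilde Q$ is a mixture of cluster-process laws over some homogeneous $\Lambda$ --- is precisely the Cox structure the theorem asserts; it cannot be read off by a change of variables, and the thinning case \cite[Th.~15]{DavMolZuy:11} you appeal to is not proved that way either. What the paper (largely following \cite{DavMolZuy:11}) does instead is a direct analytic construction: define $L[u]=\bigl(G_\Phi[A^{-1}(m^{-1/\alpha}u)]\bigr)^m$, which by \eqref{F_stable_natural} is independent of $m$ and well defined for all $u\in\bmp$ once $m$ is large enough (\eqref{eq:ml}); compare $A^{-1}(m^{-1/\alpha}u)$ with $e^{(A^{-1})'(0)m^{-1/\alpha}u}$ in the limit $m\to\infty$ (\eqref{eq:eqlim}); deduce positive definiteness of $L$ via the Schoenberg and Bochner theorems \cite{BCR}; check continuity along monotone limits; and invoke the random-measure existence theorem \cite[Th.~9.4.II]{DVJ2}. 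Only after $\xi$ has been so constructed does one verify its \sas\ property by the substitution you describe. Without this analytic block, or a genuine substitute for it, the necessity direction is unproven. A minor further point: the bound $0<\alpha\le 1$ in necessity follows at once from Theorem~\ref{th:charFstabrv} applied to the $\mathcal{F}$-stable random variable $\Phi(B)$, which is cleaner than inferring it from positivity of a measure $\xi$ whose existence is the very thing in question.
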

\begin{proof}
  \textit{Sufficiency:} Suppose \eqref{pgfl_fstable} holds. As it was
  shown in \cite{DavMolZuy:11}, \sas\ random measures exist only for
  $0<\alpha\leq 1$, $\alpha=1$ corresponding to non-random measures.
  Next, by \eqref{eq:2} and \eqref{eq:pgfl_cox}, $L_{\xi}\big[1-h\big]$ as a
  functional of $h$ is the p.g.fl.\ of a Cox point process with intensity $\xi$ and $B(z)$
  is the p.g.f.\ of the limiting conditional distribution of the
  branching process driven by $\mathcal{F}$. Therefore, by \eqref{eq:clg}, the functional
  $G_\Phi [h]=L_{\xi}\big[1-B(h)\big]$ is the \pgfl of a cluster
  process, say $\Phi$. We need to prove that $\Phi$ is
  $\mathcal{F}$-stable with exponent $\alpha$. Given $t\in(0,1]$ and
  $h\in\mathcal{V}(\mathcal{X})$ it holds that
  \begin{multline*}
    G_\Phi\big[F_{-\ln t/\alpha}(h)\big]\,
    G_\Phi\big[F_{-\ln(1-t)/\alpha}(h)\big]
=\\=
L_\xi\Big[A\big(F_{-\ln
      t/\alpha}(h)\big)\Big]\,
    L_\xi\Big[A\big(F_{-\ln(1-t)/\alpha}(h)\big)\Big]
\stackrel{\eqref{A_property_1}}{=}
    L_\xi\big[t^{1/\alpha}A(h)\big]\,  L_\xi\big[(1-t)^{1/\alpha}A(h)\big].
  \end{multline*}\
Since $\xi$ is \sas, it satisfies \eqref{definition} and thus 
\begin{equation*}
  L_\xi\big[t^{1/\alpha}A(h)\big]\,  L_\xi\big[(1-t)^{1/\alpha}A(h)\big]
  =
L_\xi\big[A(h)\big].
\end{equation*}\
Therefore
\begin{displaymath}
  G_\Phi\big[F_{-\ln t/\alpha}(h)\big]\,
  G_\Phi\big[F_{-\ln(1-t)/\alpha}(h)\big]
  =G_\Phi[h]
\end{displaymath}
for any $h$ in $\mathcal{V}(\mathcal{X})$, meaning that $\Phi$ is $\mathcal{F}$-stable with exponent $\alpha$.

\bigskip
\noindent
\textit{Necessity:} Suppose that $\Phi$ is $\mathcal{F}$-stable with
exponent $\alpha$. Writing \eqref{defi_fstable_pp} for the values of
the measures on a particular compact set $B\in\sB(\sX)$, we see that
$\Phi(B)$ is an $\mathcal{F}$-stable random variable with exponent
$\alpha$. Thus by Theorem~\ref{th:charFstabrv} we have
$0<\alpha\leq1$. Now we are going to prove that $G_{\Phi}[A^{-1}(u)]$,
as a functional of $u$, is the Laplace functional of a \sas\ random
measure.  While a Laplace functional should be defined on all
(bounded) functions with compact support, the expression
$G_{\Phi}[A^{-1}(u)]$ is well defined just for functions with values
on $[0,1]$ because $A^{-1}:[0,1]\rightarrow[0,1]$. To overcome this
difficulty we employ \eqref{F_stable_natural} which can be written as
\begin{displaymath}
  G_\Phi[h]=\big(G_\Phi[F_{\alpha^{-1}\ln m}(h)]\big)^m\qquad\forall h\in\mathcal{V}(\mathcal{X}),
\end{displaymath}
and define
\begin{equation}
  \label{eq:ml}
  L[u]=\Big(G_\Phi\big[F_{\alpha^{-1}\ln
    m}\big(A^{-1}(u)\big)\big]\Big)^m\stackrel
  {\eqref{A_property_1}}
  =\Big(G_\Phi[A^{-1}(m^{-1/\alpha}u)]\Big)^m\,
  \qquad u\in \bmp,
\end{equation}\
for any $m\geq1$ such that $m^{-1/\alpha}u<1$.
Note that the right-hand side of \eqref{eq:ml} does not depend on $m$.
Moreover, given $m^{-1/\alpha}u<1$, the function
$A^{-1}(m^{-1/\alpha}u)$ does take values in $[0,1]$ and equals $1$
outside of a compact set, implying that $A^{-1}(m^{-1/\alpha}u)\in\mathcal{V}(\mathcal{X}) $.
Therefore $L[u]$ in \eqref{eq:ml} is well-defined.
Since (\ref{eq:ml}) holds for all $m$, it is
possible to pass to the limit as $m\to\infty$ to see that
\begin{equation}\label{eq:L_defi_lim}
 L[u]=\exp\Big\{-\lim_{m\to\infty}m(1-G_\Phi[A^{-1}(m^{-1/\alpha}u)])\Big\}
\end{equation}
We now need the following fact:
\begin{equation}
\lim_{m\to\infty} m(1-G_\Phi[A^{-1}(m^{-1/\alpha}u)])
 =
\lim_{m\to\infty} m(1-G_\Phi[e^{(A^{-1})(0)\,m^{-1/\alpha}u}])
\,. \label{eq:eqlim}
\end{equation}\
Since $A^{-1}$ is continuous, strictly decreasing and differentiable
in 0 with $A^{-1}(0)=1$ and $(A^{-1})'(0)< 0$ (see Section
\ref{sec:brachn-proc-refr}), it follows that for any constant
$\epsilon>0$ there exists $M(\epsilon,u)>0$ such that 
\begin{equation}\label{eq:exp_ineq}
A^{-1}(m^{-1/\alpha}u(1+\epsilon))\leq 
e^{(A^{-1})'(0)m^{-1/\alpha}u}\leq
A^{-1}(m^{-1/\alpha}u(1-\epsilon))
\qquad \forall\; m\geq M(\epsilon,u).
\end{equation}\
From \eqref{eq:L_defi_lim}, \eqref{eq:exp_ineq} and the monotonicity of $G_{\Phi}$ we can deduce
\begin{equation}\label{eq:double_control}
  L[(1+\epsilon)u]\leq
\exp\left\{-\lim_{m\rightarrow\infty} m(1-G_\Phi[e^{(A^{-1})'(0)m^{-1/\alpha}u}])\right\}
\leq L[(1-\epsilon)u]\,.
\end{equation}\
Note that $L$ is continuous because of \eqref{eq:ml} and the continuity of $G_{\Phi}$.
Therefore taking the limit for $\epsilon$ going to 0 in \eqref{eq:double_control} and using \eqref{eq:L_defi_lim} we obtain \eqref{eq:eqlim}. 

From \eqref{eq:L_defi_lim} and Schoenberg
theorem \cite[Theorem 3.2.2]{BCR} it follows that $L[u]$ is positive definite if
$\lim_{m\rightarrow\infty} m(1-G_\Phi[1-B^{-1}(m^{-1/\alpha}u)])$
is negative definite, i.e.\ by \eqref{eq:eqlim} if
\begin{equation}\label{eq:neg_def}
  \sum_{i,j=1}^n c_ic_j \lim_{m\to\infty} m(1-G_\Phi[e^{(A^{-1})(0)\,m^{-1/\alpha}(u_i+u_j)}])\leq 0,
\end{equation}
for all $n\geq2$, for any $u_1,\dots,u_n\in \bmp$ and
for any $c_1,\dots,c_n$ with $\sum c_i=0$.
If we set $v_i=e^{(A^{-1})(0)\,m^{-1/\alpha}u_i}$, then \eqref{eq:neg_def} is equivalent to 
$\sum_{i,j=1}^n c_ic_j \lim_{m\to\infty} G_\Phi[v_iv_j]\geq 0$,
which follows from the positive definiteness of $G_\Phi$.
Thus, by the Bochner theorem \cite[Theorem 4.2.9]{BCR},
the function $L[\sum_{i=1}^k t_ih_i]$ of $t_1,\dots,t_k\geq0$ is the
Laplace transform of a random vector. Moreover $L[\textbf{0}]=1$,
where $\textbf{0}$ is the null function on $\mathcal{X}$. Finally from
\eqref{eq:ml} and the continuity of the p.g.fl.\ $G_\Phi$ it follows
that given $\{f_n\}_{n\in\mathbb{N}}\subset \bmp$,
$f_n\uparrow f\in \bmp$ we have $L[f_n]\rightarrow L[f]$ as
$n\to\infty$. Therefore we can use Theorem 9.4.II in \cite{DVJ2} to
obtain that $L$ is the Laplace functional of a random measure $\xi$.

In order to prove that $\xi$ is \sas, let $u\in \bmp$ and take an integer $m\geq (\sup
u)^\alpha$ and denote by $\hat{u}=m^{-1/\alpha}u\leq 1$. By
\eqref{eq:ml}, for any given $t\in(0,1]$ we have
\begin{multline*}
  L_\xi [u]=G^m_{\Phi}[A^{-1}(\hat{u})]
\stackrel{\eqref{defi_fstable_pp}}=
G^m_\Phi\big[F_{-\ln
    t/\alpha}(A^{-1}(\hat{u}))\big]\,
  G^m_\Phi\big[F_{-\ln(1-t)/\alpha}(A^{-1}(\hat{u}))\big]
\stackrel{\eqref{A_property_1}}=\\
  G^m_\Phi\big[A^{-1}(t^{1/\alpha}\hat{u})\big]\,
  G^m_\Phi\big[A^{-1}((1-t)^{1/\alpha}\hat{u})\big]=L_\xi [t^{1/\alpha}u]\,
  L_\xi [(1-t)^{1/\alpha}u],
\end{multline*}\
which implies that $\xi$ is St$\alpha$S. 
\end{proof}
\begin{corollary}\label{fas_as_das_cluster}
  A p.p.\ $\Phi$ on $\mathcal{X}$ is $\mathcal{F}$-stable with
  exponent $\alpha$ if and only if it is a cluster process with
  \das\ centre process $\Psi$ on $\mathcal{X}$ and component
  processes $\big\{(Y_\infty)_x,\ x\in \mathcal{X}\big\}$
  (see Definitions \ref{defi:Y_infty} and \ref{defi:Y_x}).
\end{corollary}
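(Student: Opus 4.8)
The plan is to read the corollary off Theorem~\ref{teo15} together with two facts already recorded in Section~\ref{sec:das_pp}: that \das\ point processes are exactly the Cox processes driven by \sas\ random measures (the representation \eqref{eq:pg-pp-spec}), and the cluster composition rule \eqref{eq:clg}. The whole argument hinges on the single chain of identities
\begin{equation*}
  G_\Phi[h]=L_\xi[1-B(h)]=G_\Psi[B(h)],
\end{equation*}
in which $\xi$ is a \sas\ random measure, $\Psi$ is the Cox process with parameter measure $\xi$ (hence \das), and $B(h)$ abbreviates the function $x\mapsto B(h(x))$. At the level of generating functionals there is therefore little to do beyond matching the representation of Theorem~\ref{teo15} with the \pgfl\ of a cluster process; the care required is in verifying that the composition is admissible.

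Before carrying out the identification I would dispatch the two routine admissibility points. First, $B(h)\in\mathcal{V}(\mathcal{X})$ whenever $h\in\mathcal{V}(\mathcal{X})$: since $B=1-A$ is continuous, strictly increasing and surjective on $[0,1]$ with $B(0)=0$ and $B(1)=1$, one has $B(h(x))\in(0,1]$ and $\{x:B(h(x))\neq1\}=\{x:h(x)\neq1\}$ is compact, so $B(h)$ is again in $\mathcal{V}(\mathcal{X})$. Second, $\{(Y_\infty)_x,\ x\in\mathcal{X}\}$ is a measurable family in the sense of Section~\ref{sec:point-proc-refr}, because $(Y_\infty)_x$ places a single atom of $Y_\infty$-distributed multiplicity at $x$, making $x\mapsto\P((Y_\infty)_x\in A)$ measurable for every $A\in\sB(\sN)$. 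By Definition~\ref{defi:Y_x} the cluster $(Y_\infty)_x$ has \pgfl\ $G_{(Y_\infty)_x}[h]=B(h(x))$, the \pgf\ of $Y_\infty$ being exactly the $B$-function of Definition~\ref{defi:Y_infty}.

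With these in hand both implications are immediate. If $\Phi$ is $\mathcal{F}$-stable with exponent $\alpha$, Theorem~\ref{teo15} gives $0<\alpha\le1$ and a \sas\ measure $\xi$ with $G_\Phi[h]=L_\xi[1-B(h)]$; letting $\Psi$ be the Cox process with parameter measure $\xi$, its \pgfl\ is $G_\Psi[g]=L_\xi[1-g]$ by \eqref{eq:pgfl_cox}, and $\Psi$ is \das\ with exponent $\alpha$ by \eqref{eq:pg-pp-spec}. Substituting $g=B(h)$ and applying \eqref{eq:clg} with centre $\Psi$ and clusters $(Y_\infty)_x$ identifies $G_\Phi[h]=G_\Psi[B(h)]$ as the \pgfl\ of the asserted cluster process. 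Conversely, if $\Phi$ is a cluster process with \das\ centre $\Psi$ and clusters $(Y_\infty)_x$, then \eqref{eq:clg} gives $G_\Phi[h]=G_\Psi[B(h)]$; writing $G_\Psi[g]=L_\xi[1-g]$ for the \sas\ measure $\xi$ driving $\Psi$ yields $G_\Phi[h]=L_\xi[1-B(h)]$, and Theorem~\ref{teo15} then certifies that $\Phi$ is $\mathcal{F}$-stable with exponent $\alpha$. I do not expect a genuine obstacle here: once Theorem~\ref{teo15} is available, the content is the bookkeeping identity above, and the only subtleties---the domain check $B(h)\in\mathcal{V}(\mathcal{X})$ and the measurability of the cluster family---are routine.
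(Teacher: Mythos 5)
Your proof is correct and follows essentially the same route as the paper's: both read the corollary off Theorem~\ref{teo15} by identifying $G_\Phi[h]=L_\xi[1-B(h)]=G_\Psi[B(h)]$ with $\Psi$ the \das\ Cox process from \eqref{eq:pg-pp-spec}, and then invoke the cluster-process composition \eqref{eq:clg}. The only difference is that you spell out the routine admissibility checks (that $B(h)\in\mathcal{V}(\mathcal{X})$ and that $\{(Y_\infty)_x\}$ is a measurable family), which the paper leaves implicit.
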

\begin{proof}
  From Theorem \ref{teo15} and \eqref{AB_relationship} it follows that
  $\Phi$ is $\mathcal{F}$-stable if and only if its p.g.fl.\ satisfies
  $G_\Phi [h]=L_{\xi}\big[1-B(h)\big],$ where $B(\cdot)$ is the
  p.g.f.\ of $Y_\infty$, and $\xi$ is a St$\alpha$S random
  measure.
  By \eqref{eq:pg-pp-spec} there is a \das\ point process
  $\Psi$ with $G_\Psi [h]=L_{\xi}\big[1-h\big].$
  We obtain that $G_\Phi [h]=G_\Psi\big[B(h)\big]$.
  The result follows from the form \eqref{eq:clg} of
  the \pgfl of a cluster process.
\end{proof}
Corollary \ref{fas_as_das_cluster} clarifies the relationship between
$\mathcal{F}$-stable and D$\alpha$S point processes:
$\mathcal{F}$-stable p.p.'s are an extension of D$\alpha$S p.p.'s, where
every point is given an additional multiplicity according to
independent copies of $Y_\infty$ (the latter is fixed by
$\mathcal{F}$). Note that when the branching operation is thinning,
the random variable $Y_\infty$ is identically 1 (that stems from
\eqref{UAB_function_thinning}) and the $\mathcal{F}$-stable p.p.\ is the
D$\alpha$S centre process itself.

\begin{corollary}\label{coroll_spectral_fas}
 A p.p.\ $\Phi$ is $\mathcal{F}$-stable with
  exponent $0<\alpha\leq 1$ if and only if its \pgfl\ can be written as
\begin{equation}\label{spectral_representation_fstable}
  G_\Phi [u]=\exp\big\{-\int_\mathbb{S}\langle
  1-B(u),\mu\rangle^\alpha \sigma(d\mu)\big\},
\end{equation}\
where $\sigma$ is a locally finite spectral measure on $\mathbb{S}$
satisfying \eqref{eq:s-fin}.
\end{corollary}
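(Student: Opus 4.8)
The plan is to combine Theorem~\ref{teo15} with the spectral representation~\eqref{eq:sd} of a \sas\ random measure. Theorem~\ref{teo15} tells us that $\Phi$ is $\mathcal{F}$-stable with exponent $\alpha$ if and only if $0<\alpha\leq 1$ and there is a \sas\ random measure $\xi$ with
\begin{equation*}
  G_\Phi[u]=L_\xi\big[1-B(u)\big]\qquad\forall u\in\sV(\sX).
\end{equation*}
So the entire task reduces to substituting the known form of the Laplace functional $L_\xi$ into this identity and reading off the result.

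First I would recall that, by~\eqref{eq:sd} in the thinning-stable preliminaries, a \sas\ random measure $\xi$ with exponent $\alpha\in(0,1)$ has Laplace functional
\begin{equation*}
  L_\xi[v]=\exp\Big\{-\int_{\SS}\langle v,\mu\rangle^\alpha\,\sigma(d\mu)\Big\},\qquad v\in\bmp,
\end{equation*}
for a unique locally finite spectral measure $\sigma$ on $\SS$ satisfying the integrability condition~\eqref{eq:s-fin}. The key observation is that for $u\in\sV(\sX)$ the function $1-B(u)=A(u)$ lies in $\bmp$: since $B:[0,1]\to[0,1]$ is continuous, strictly increasing and surjective with $B(1)=1$, the composition $1-B(u)$ is non-negative, bounded, and vanishes precisely where $u\equiv 1$, hence has bounded support because $u\in\sV(\sX)$. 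Thus $v=1-B(u)$ is an admissible argument of $L_\xi$, and substituting it into the formula above yields
\begin{equation*}
  G_\Phi[u]=L_\xi\big[1-B(u)\big]=\exp\Big\{-\int_{\SS}\langle 1-B(u),\mu\rangle^\alpha\,\sigma(d\mu)\Big\},
\end{equation*}
which is exactly~\eqref{spectral_representation_fstable}. Conversely, any functional of this form is $L_\xi[1-B(u)]$ for the \sas\ measure $\xi$ determined by $\sigma$ via~\eqref{eq:sd}, so Theorem~\ref{teo15} gives $\mathcal{F}$-stability.

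The main point requiring care, rather than a genuine obstacle, is the boundary case $\alpha=1$, where~\eqref{eq:sd} need not apply because the \sas\ measure is then deterministic; I would note that this case is consistent with the stated representation and is covered by the same limiting/continuity arguments used in~\cite{DavMolZuy:11}, so~\eqref{spectral_representation_fstable} holds throughout $0<\alpha\leq 1$. Beyond that, the only thing to verify is that the uniqueness and local finiteness of $\sigma$ transfer verbatim from the \sas\ setting, which is immediate since $u\mapsto 1-B(u)$ is a bijection of $\sV(\sX)$ onto its image and does not alter the spectral measure. I therefore expect the proof to be short: invoke Theorem~\ref{teo15} to pass to $\xi$, insert~\eqref{eq:sd}, and check the admissibility of the argument $1-B(u)$.
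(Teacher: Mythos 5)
Your proposal is correct and follows essentially the same route as the paper's proof: both directions reduce to Theorem~\ref{teo15} combined with the spectral representation~\eqref{eq:sd}, passing between $G_\Phi[u]=L_\xi[A(u)]=L_\xi[1-B(u)]$ and the \sas\ measure $\xi$ determined by $\sigma$. Your added checks (that $1-B(u)\in\bmp$ for $u\in\sV(\sX)$, and the remark on $\alpha=1$) are sensible refinements that the paper leaves implicit, but they do not change the argument.
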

\begin{proof}
  If $\Phi$ is an $\mathcal{F}$-stable point process with stability
  exponent $\alpha$, then by Theorem~\ref{teo15} there exist a
  St$\alpha$S random measure $\xi$ such that
  \begin{displaymath}
    G_\Phi [h]=L_{\xi}\big[A(h)\big]\qquad h\in\mathcal{V}(\mathcal{X}).
\end{displaymath}
Thus (\ref{spectral_representation_fstable}) follows from spectral
representation \eqref{eq:sd}. Conversely, if we
have a locally finite spectral measure $\sigma$ on $\mathbb{S}$
satisfying \eqref{eq:s-fin} and $\alpha\in (0,1]$,
then $\sigma$ is the spectral measure of a St$\alpha$S random measure
$\xi$, whose Laplace functional is given by
\eqref{eq:sd}. Therefore
(\ref{spectral_representation_fstable}) can be written as
\begin{displaymath}
  G_\Phi [h]=L_{\xi}\big[1-B(h)\big],
\end{displaymath}
which, by Theorem~\ref{teo15} implies the $\mathcal{F}$-stability of $\Phi$.
\end{proof}

We also get the following generalisation of Theorem~\ref{th:pois-sib}.
\begin{theorem}
  An $\mathcal{F}$-stable point process with a spectral measure
  $\sigma$ supported only by the set $\M_1$ of probability measures
  can be represented as a cluster process with centre process being a
  Poisson process on $\mathbb{M}_1$ driven by the spectral measure
  $\sigma$ and daughter processes having p.g.fl.\
  $G_{\Upsilon(\mu)}[B(h)]$, where $\Upsilon(\mu)$ are
  $\Sib(\alpha,\mu)$ distributed point processes and $B(\cdot)$ is the
  $B$-function of the branching process driven by $\mathcal{F}$. The
  daughter process corresponds to a Sibuya p.p.\ $\Upsilon(\mu)$ with
  its every point given a multiplicity according to independent
  copies of $Y_\infty$.
\end{theorem}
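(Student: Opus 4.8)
The plan is to deduce the representation directly from the spectral form of the p.g.fl. already obtained in Corollary~\ref{coroll_spectral_fas} and to recognise the result as the p.g.fl. of an iterated Poisson cluster process via the composition formula~\eqref{eq:clg}. First I would specialise Corollary~\ref{coroll_spectral_fas} to a spectral measure $\sigma$ carried by $\M_1$, so that the $\mathcal{F}$-stable process $\Phi$ has p.g.fl.
\begin{displaymath}
  G_\Phi[h]=\exp\Big\{-\int_{\M_1}\langle 1-B(h),\mu\rangle^\alpha\,\sigma(d\mu)\Big\},\qquad h\in\sV(\sX).
\end{displaymath}
Using the definition~\eqref{eq:sibpgfl} of the Sibuya p.g.fl., namely $G_{\Upsilon(\mu)}[g]=1-\langle 1-g,\mu\rangle^\alpha$, with $g=B(h)$ one has $\langle 1-B(h),\mu\rangle^\alpha=1-G_{\Upsilon(\mu)}[B(h)]$, whence
\begin{displaymath}
  G_\Phi[h]=\exp\Big\{\int_{\M_1}\big(G_{\Upsilon(\mu)}[B(h)]-1\big)\,\sigma(d\mu)\Big\}.
\end{displaymath}

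Next I would read off the cluster structure. A Poisson process on $\M_1$ with intensity $\sigma$ has p.g.fl. $g\mapsto\exp\{-\int_{\M_1}(1-g(\mu))\,\sigma(d\mu)\}$, so the last display is exactly the composition~\eqref{eq:clg} for a cluster process whose centre is this Poisson process and whose daughter process at $\mu$ has p.g.fl. $G_{\Upsilon(\mu)}[B(\cdot)]$; this gives the asserted Poisson cluster representation. To interpret the daughter process I would appeal to Definition~\ref{defi:Y_x}: replacing each point $x$ of a realisation of $\Upsilon(\mu)$ by an independent copy of $(Y_\infty)_x$, which has p.g.fl. $B(h(x))$ since $B$ is the p.g.f. of $Y_\infty$, produces by~\eqref{eq:clg} a process with p.g.fl. $G_{\Upsilon(\mu)}[B(h)]$. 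Thus the daughter process is a Sibuya process $\Upsilon(\mu)$ in which every point is given an independent $Y_\infty$-multiplicity, as claimed.

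The algebra is routine, so the only points deserving attention are the well-definedness of the nested clustering. I would verify that $\mu\mapsto G_{\Upsilon(\mu)}[B(h)]$ is measurable, which is immediate from the explicit form~\eqref{eq:sibpgfl}, so that the $Y_\infty$-augmented Sibuya processes form a measurable family and the outer Poisson cluster process is well defined, and that the exponent converges for every $h\in\sV(\sX)$, which follows from~\eqref{eq:s-fin} together with $B=1-A$ in~\eqref{AB_relationship}. Equivalently and perhaps more transparently, the same conclusion can be reached by composing two representations already in hand: Corollary~\ref{fas_as_das_cluster} exhibits $\Phi$ as a cluster process with \das\ centre $\Psi$ and $(Y_\infty)_x$ components, while Theorem~\ref{th:pois-sib} exhibits that \das\ process $\Psi$, whose spectral measure is the same $\sigma$ on $\M_1$, as a Poisson cluster process with Sibuya components; associativity of independent clustering then assembles the stated three-layer description, with the main (mild) obstacle being to justify that the two independent clustering operations may be carried out one after the other.
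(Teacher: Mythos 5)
Your proposal is correct and follows essentially the same route as the paper's own proof: specialise the spectral representation of Corollary~\ref{coroll_spectral_fas} to a measure $\sigma$ supported by $\M_1$, rewrite the exponent using the Sibuya p.g.fl.\ \eqref{eq:sibpgfl}, and recognise the result as the p.g.fl.\ of a Poisson cluster process whose daughter processes, by \eqref{eq:clg} and the fact that $B$ is the p.g.f.\ of $Y_\infty$, are Sibuya processes with independent $Y_\infty$-multiplicities attached to their points. Your additional remarks on measurability and the alternative assembly via Corollary~\ref{fas_as_das_cluster} and Theorem~\ref{th:pois-sib} go slightly beyond what the paper records but do not change the argument.
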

\begin{proof}
  In the case when the spectral measure $\sigma$ is supported by
  probability measures, representation
  (\ref{spectral_representation_fstable}) becomes
  \begin{equation}\label{spectral_representation_fstable_finite}
      G_\Phi [h]=\exp\big\{-\int_{\mathbb{M}_1}\langle 1-B(h),
      \mu\rangle^\alpha \sigma(d\mu)\big\}\qquad\forall h\in\mathcal{V}(\mathcal{X}),
    \end{equation}\
  where $\mathbb{M}_1$ is the space of probability measures on
  $\mathcal{X}$. In terms of the \pgfl \eqref{eq:sibpgfl} of a Sibuya
  p.p., this reads
  \begin{multline}
      G_\Phi [h]=\exp\big\{-\int_{\mathbb{M}_1}1-(1-\langle
      1-B(h),\mu\rangle^\alpha) \sigma(d\mu)\big\}=
      \\
      =\exp\big\{-\int_{\mathbb{M}_1}\big(1-G_{\Upsilon(\mu)}[B(h)]\big)
      \sigma(d\mu)\big\}\qquad
      h\in\mathcal{V}(\mathcal{X}),\label{spectral_representation_fstable_sibuya} 
    \end{multline}\
  where $\Upsilon(\mu)$ denotes a point process following the
  $\Sib(\alpha,\mu)$ distribution. Notice that, since by
  (\ref{AB_relationship}), $B(\cdot)$ is the p.g.f.\ of the
  distribution $Y_\infty$, $G_{\Upsilon(\mu)}[B(h)]$ is the p.g.fl.\
  of a point process by \eqref{eq:clg}.
\end{proof}

As we have seen in \eqref{F_stable_natural}, $\mathcal{F}$-stable
processes are infinitely divisible. The latter can be divided into two
classes: regular and singular depending on whether their KLM-measure
is supported by the set of finite or infinite configurations (\seg
\cite[Def.10.2.VI]{DVJ2}). Similarly to the proof of Theorem~29 in
\cite{DavMolZuy:11} on the decomposition of \das\ processes, we can
extend this result to $\mathcal{F}$-stable processes.
\begin{theorem}\label{das_decomposition}
  An $\mathcal{F}$-stable p.p.\ $\Phi$ with a spectral measure $\sigma$ can
  be represented as the sum of two independent $\mathcal{F}$-stable
  point processes:
  \begin{displaymath}
    \Phi=\Phi_r+\Phi_s,
  \end{displaymath}
  where $\Phi_r$ is regular and $\Phi_s$ singular. $\Phi_r$ is an
  $\mathcal{F}$-stable p.p.\ with spectral measure being $\sigma\big|
  _{\M_1}=\sigma(\cdot\,\cap\M_1)$ and $\Phi_s$ is an
  $\mathcal{F}$-stable p.p.\ with spectral measure $\sigma\big|
  _{\SS\setminus \M_1}$.
\end{theorem}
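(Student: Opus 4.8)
The plan is to reduce this decomposition to the already-known decomposition of the St$\alpha$S random measure $\xi$ driving $\Phi$, exploiting the fact (Theorem~\ref{teo15}) that $G_\Phi[h]=L_\xi[A(h)]$ is a fixed deterministic transformation of the Laplace functional of $\xi$. Concretely, the spectral measure $\sigma$ of $\Phi$ lives on $\SS$ and splits canonically as $\sigma = \sigma|_{\M_1} + \sigma|_{\SS\setminus\M_1}$, where $\M_1$ is the set of finite (probability) measures and $\SS\setminus\M_1$ consists of the infinite ones. Since the representation \eqref{eq:sd}--\eqref{eq:s-fin} is an integral over $\SS$ against $\sigma$, this splitting is additive inside the exponent, so I would first define $\xi_r$ and $\xi_s$ to be the St$\alpha$S random measures with spectral measures $\sigma|_{\M_1}$ and $\sigma|_{\SS\setminus\M_1}$ respectively, taken independent of each other. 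By \eqref{eq:sd} we then have $L_\xi[u]=L_{\xi_r}[u]\,L_{\xi_s}[u]$, and both $\sigma|_{\M_1}$ and $\sigma|_{\SS\setminus\M_1}$ still satisfy the finiteness condition \eqref{eq:s-fin} because they are restrictions of a measure that does, so $\xi_r,\xi_s$ are legitimate St$\alpha$S measures.

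Next I would define $\Phi_r$ and $\Phi_s$ through Theorem~\ref{teo15} as the $\mathcal{F}$-stable point processes with $G_{\Phi_r}[h]=L_{\xi_r}[A(h)]$ and $G_{\Phi_s}[h]=L_{\xi_s}[A(h)]$, taken independent. Corollary~\ref{coroll_spectral_fas} guarantees these are genuine $\mathcal{F}$-stable processes with spectral measures $\sigma|_{\M_1}$ and $\sigma|_{\SS\setminus\M_1}$. The multiplicativity $L_\xi[A(h)]=L_{\xi_r}[A(h)]\,L_{\xi_s}[A(h)]$ then gives $G_\Phi[h]=G_{\Phi_r}[h]\,G_{\Phi_s}[h]$, which by independence is exactly the p.g.fl.\ identity $\Phi\deq\Phi_r+\Phi_s$. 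So the distributional decomposition into two independent $\mathcal{F}$-stable summands with the stated spectral measures is immediate from the linearity of the spectral representation in $\sigma$.

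The substantive step—and the main obstacle—is to verify that $\Phi_r$ is indeed \emph{regular} and $\Phi_s$ \emph{singular} in the sense of their KLM-measures being supported respectively by finite and infinite configurations. Here I would lean on Corollary~\ref{fas_as_das_cluster}: each $\Phi_r,\Phi_s$ is a cluster process over a \das\ centre process (driven by $\xi_r,\xi_s$) with clusters $(Y_\infty)_x$. For the \das\ centre processes, Theorem~29 of \cite{DavMolZuy:11} already identifies the regular/singular dichotomy with the support of the spectral measure on $\M_1$ versus $\SS\setminus\M_1$: a spectral measure on $\M_1$ yields a centre that is a Poisson cluster process with a.s.\ finite Sibuya clusters (Theorem~\ref{th:pois-sib}), hence finite KLM-atoms, whereas an infinite spectral measure produces infinite configurations. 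The point I must check is that attaching i.i.d.\ finite multiplicities $Y_\infty$ to each point preserves this finiteness/infiniteness classification. Since $Y_\infty$ is an a.s.\ finite positive integer random variable (being the Yaglom limit of a subcritical process), replacing each of finitely many support points by finitely many copies keeps a configuration finite, and replacing each point of an infinite configuration leaves it infinite; thus the regular/singular character of the KLM-measure is inherited from the \das\ centre. I would make this precise by arguing that the finite-cluster operation induced by $Y_\infty$ maps finite KLM-support to finite and infinite to infinite, so the KLM-measure of $\Phi_r$ is supported on finite configurations and that of $\Phi_s$ on infinite ones, exactly mirroring the \das\ case in \cite[Th.~29]{DavMolZuy:11}.
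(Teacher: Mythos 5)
Your proposal is correct and follows essentially the same route the paper intends: the paper's own proof is a one-line reference to Theorem~29 of \cite{DavMolZuy:11}, whose core argument---splitting $\sigma$ into $\sigma|_{\M_1}$ and $\sigma|_{\SS\setminus\M_1}$, which is additive inside the exponent of the spectral representation \eqref{spectral_representation_fstable} and hence yields two independent $\mathcal{F}$-stable summands---is exactly what you reproduce, after which you transfer the regular/singular dichotomy through the $(Y_\infty)_x$ clusters of Corollary~\ref{fas_as_das_cluster}. The one fact your last step hinges on, and which deserves explicit mention, is that $Y_\infty$ is a.s.\ finite \emph{and} a.s.\ at least $1$ (since its p.g.f.\ satisfies $B(0)=1-A(0)=0$), so that clustering by $Y_\infty$ maps finite configurations to finite ones and infinite configurations to infinite ones, making the KLM-measure of $\Phi_r$ (resp.\ $\Phi_s$) inherit its support from the \das\ centre process; you do state this, so the argument is complete at the paper's own level of rigour.
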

The regular component $\Phi_r$ can be represented as a cluster
p.p.\ with p.g.fl.\ given by
\begin{displaymath}
  G_{\Phi_r}[h]=\exp\big\{-\int_{\mathbb{M}_1}\big(1-G_{\Upsilon(\mu)}[B(h)]\big)
  \sigma|_{\M_1}(d\mu)\big\}\qquad\forall h\in\mathcal{V}(\mathcal{X}).
\end{displaymath}
On the contrary, the singular component $\Phi_s$ is not a
cluster p.p., and its p.g.fl.\ is given by
\eqref{spectral_representation_fstable} (with $\sigma$ replaced
with $\sigma\big|_{\SS\setminus\M_1}$ there).

\section{General branching stability for point
  processes}\label{sec:general_branching_stability}
Stable distributions appear in various limiting schemes because
decomposition of a sum into a proportion $t$ and $1-t$ of the summands
inevitably leads to the limiting distribution satisfying
\eqref{definition}. We have seen that this argument still works for point
processes when the multiplication is replaced by a stochastic
branching operation, the reason being associativity and distributivity
with respect to the sum (superposition). One may ask: to which extent
one can generalise this stochastic multiplication operation so that it
still satisfies associativity and distributivity? The answer is given
in this section: branching operations are, in this sense, the
exhaustive generalisation.

\subsection{Markov branching processes on $\sN$}

Markov branching processes on $\sN$ (also called branching
diffusions or branching particle systems) basically consist of a
diffusion component and a branching component: each particle,
independently of the others, moves according to a diffusion process
and after an exponential time it branches. When a particle branches it is
replaced by a random configuration of points (possibly empty, in which
case the particle dies) depending on the location of the particle at
the branching time (e.g.\ \cite{AH:83}, \cite{D:94} or \cite{Etheridge:00}).

Alternatively branching particle systems can be defined as Markov
processes satisfying the branching property, as follows.

\begin{definition}\label{defi:gmbp}
  \emph{A Markov branching process} on $\sN$ is a time-homogeneous
  Markov process $(\Psi_t^\varphi)_{t\geq0,\,\varphi\in\sN}$ on
  $(\sN,\mathcal{B}(\sN))$, where $t$ denotes time and $\varphi$ the
  starting configuration, such that its probability transition kernel
  $P_t(\varphi,\cdot)$ satisfies the branching property:
\begin{equation}\label{eq:branching_property_particle_systems}
P_t(\varphi_1+\varphi_2,\cdot)
\;=\;
P_t(\varphi_1,\cdot)*P_t(\varphi_2,\cdot),
\end{equation}\
for any $t\geq 0$ and $\varphi_1$, $\varphi_2$ in $\sN$.
\end{definition}
The branching property \eqref{eq:branching_property_particle_systems}
can also be expressed in terms of p.g.fl.'s as follows:
\begin{equation}\label{condition2_eq}
    G_t^\varphi[h]\;=\left\{
  \begin{array}{ll}
    1, & \hbox{if }\varphi=\zero,\\
    \prod_{x\in\varphi} G_t^{\delta_x}[h], & \hbox{if }\varphi\neq\zero,
  \end{array}
\right.\qquad h\in\mathcal{V}(\mathcal{X}),
\end{equation}\
where $G_t^{\varphi}$ and $G_t^{\delta_x}$ are the p.g.fl.'s of
$\Psi_t^{\varphi}$ and $\Psi_t^{\delta_x}$ respectively  (\seg
\cite[Ch.~5.1]{AH:83} or \cite[Ch. 3]{Dawson1978}). Under some
additional regularity assumption, every Markov branching process on
$\sN$ (defined as above) can be expressed in terms of particles
undergoing diffusion and branching see, for example, \cite{Ikeda:68}.

In general not every starting configuration $\phi\in\sN$ is
allowed. In fact when $\phi$ consists of an infinite number of
particles, the diffusion component could move an infinite number of
particles in a bounded set. Therefore in general one needs to consider
only starting configuration $\phi$ such that $G_t^\varphi[h]<\infty$
for any $t\geq 0$ (for more details see, e.g.,
\cite[Ch. 5]{Dawson1978} or \cite[Ch. 1.8]{Etheridge:00}).

\subsection{General branching operation for point processes}

Let $\bullet \, :\ (t,\Phi)\rightarrow t\bullet\Phi,\ t\in[0,1]$ be a
stochastic operation acting on point processes on $\mathcal{X}$ or,
more exactly, on their distributions.  We assume $\bullet$ to act
independently on each realisation of the point process, meaning
that
\begin{equation}\label{eq:A1}\tag{A1}
  \P(t\bullet\Phi\in A)=\int_{\sN}\P(t\bullet\varphi\in
  A)\P_\Phi(d\varphi),\qquad A\in\mathcal{B}(\sN),\; t\in(0,1], 
\end{equation}\
where $\P_{\Phi}$ is the distribution of $\Phi$.
We require $\bullet$ to be associative and distributive with respect
to superposition: for any $t,t_1,t_2\in(0,1]$ and $\Phi,\Phi_1,\Phi_2$
independent p.p.'s on $\sN$
\begin{gather}
\label{eq:associativity}\tag{A2}
t_1\bullet(t_2\bullet\Phi)		\quad\stackrel{\mathcal{D}}=\quad
(t_1t_2)\bullet\Phi			\quad\stackrel{\mathcal{D}}=\quad
t_2\bullet(t_1\bullet\Phi),\\
\label{eq:distributivity}\tag{A3}
t\bullet(\Phi_1+\Phi_2)		\quad\stackrel{\mathcal{D}}=\quad
t\bullet\Phi_1+t\bullet\Phi_2,
\end{gather}\
where in \eqref{eq:associativity} and \eqref{eq:distributivity}
the different instances of the $\bullet$ operation are
performed independently.
Note that (A3) implies that $t\bullet \zero\stackrel{\mathcal{D}}=\zero$ for
all $t\in(0,1]$, where $\zero$ is the empty configuration.

\begin{remark}\label{rmk:configurations}
  Because of \eqref{eq:A1}, $\bullet$ is uniquely defined by its
  actions on deterministic configurations $\varphi\in\sN$. In fact,
  given \eqref{eq:A1}, $\Phi,\Phi_1,\Phi_2$ in
  \eqref{eq:associativity} and \eqref{eq:distributivity} can be
  replaced with deterministic point configurations
  $\varphi,\varphi_1,\varphi_2\in\mathcal{N}$. Note that,
  although $\varphi$ is deterministic, $t\bullet\varphi$ is generally 
  stochastic (as, for example, for the thinning operation).
\end{remark}

As we have
seen in \eqref{eq:fass} and \eqref{eq:fcom}, the (local) branching operation
$\circ_{\mathcal{F}}$ operation satisfy
\eqref{eq:A1}-\eqref{eq:distributivity}.
The following results characterises stochastic operations satisfying
\eqref{eq:A1}-\eqref{eq:distributivity} in terms of Markov branching
processes on $\sN$.
\begin{definition}
  We call a stochastic operation $\bullet$  acting on p.p.'s on $\mathcal{X}$ a
  \emph{(general) branching operation} if there exist a Markov branching
  process $(\Psi_t^\varphi)_{t\geq0,\,\varphi\in\sN}$ on $(\sN,\mathcal{B}(\sN))$,
  such that for any p.p. $\Phi$ on $\mathcal{X}$
\begin{equation}\label{passage_operation_branching}
\Psi_t^\Phi\stackrel{\mathcal{D}}=e^{-t}\bullet\Phi\qquad t\in[0,+\infty).
\end{equation}\
\end{definition}
\begin{proposition}\label{maximality_of_the_operation}
  A stochastic operation $\bullet$ satisfies
  \eqref{eq:A1}-\eqref{eq:distributivity} if and only if it is a
  general branching operation.
\end{proposition}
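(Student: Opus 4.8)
The plan is to prove the two implications separately, with the bulk of the work lying in the forward direction (that any operation satisfying (A1)--(A3) is a general branching operation).

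\emph{Branching operation $\Rightarrow$ (A1)--(A3):} This direction should be comparatively direct. If $\bullet$ is a general branching operation arising from a Markov branching process $(\Psi_t^\varphi)$, then (A1) is immediate from \eqref{passage_operation_branching} together with the fact that the transition kernel acts independently on each realisation of $\Phi$. Distributivity (A3) is exactly the branching property \eqref{eq:branching_property_particle_systems}: setting $s=-\log t$, the identity $P_s(\varphi_1+\varphi_2,\cdot)=P_s(\varphi_1,\cdot)*P_s(\varphi_2,\cdot)$ translates verbatim into $t\bullet(\Phi_1+\Phi_2)\deq t\bullet\Phi_1+t\bullet\Phi_2$. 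Associativity (A2) follows from the Chapman--Kolmogorov / semigroup property of the Markov transition kernels: $\Psi_{s_1}^{\,\cdot\,}\circ\Psi_{s_2}^{\,\cdot\,}=\Psi_{s_1+s_2}^{\,\cdot\,}$, which under $s_i=-\log t_i$ becomes $t_1\bullet(t_2\bullet\Phi)\deq(t_1t_2)\bullet\Phi$, with commutativity of the two orders coming from commutativity of addition of times.

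\emph{(A1)--(A3) $\Rightarrow$ branching operation:} This is the substantive direction. The idea is to use the operation itself to \emph{construct} a Markov branching process and then verify the defining properties. Given $\bullet$, I would define a candidate transition kernel by $P_s(\varphi,\cdot)\bydef \P(e^{-s}\bullet\varphi\in\cdot)$ for $s\geq0$ and $\varphi\in\sN$, which is well-defined on deterministic configurations by Remark~\ref{rmk:configurations}. The key steps are: first, verify that $(P_s)_{s\geq0}$ is a semigroup, i.e.\ satisfies Chapman--Kolmogorov $P_{s_1}P_{s_2}=P_{s_1+s_2}$; this is precisely a restatement of associativity \eqref{eq:associativity} under the substitution $t_i=e^{-s_i}$, since $(e^{-s_1}e^{-s_2})=e^{-(s_1+s_2)}$. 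Here one must use (A1) to justify that composing the two stochastic maps corresponds to integrating one kernel against the other. Second, verify the branching property \eqref{eq:branching_property_particle_systems}: this is a direct consequence of distributivity \eqref{eq:distributivity}, because $e^{-s}\bullet(\varphi_1+\varphi_2)\deq e^{-s}\bullet\varphi_1 + e^{-s}\bullet\varphi_2$ with the two instances independent means exactly that the law of the sum is the convolution of the laws. Third, establish time-homogeneity, which is automatic since $P_s$ depends only on $s$ (equivalently, only on $t=e^{-s}$) and not on an absolute time origin. Finally, set $\Psi_t^\varphi$ to be the Markov process with these transition kernels and starting configuration $\varphi$; then \eqref{passage_operation_branching} holds by construction, and (A1) guarantees that the action on a random $\Phi$ is recovered by mixing over $\P_\Phi$.

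The main obstacle I anticipate is \textbf{measurability and well-definedness of the kernel}, rather than the algebraic identities, which are essentially formal once the dictionary $t=e^{-s}$ is fixed. Specifically, one must check that $\varphi\mapsto P_s(\varphi,A)$ is $\sB(\sN)$-measurable for each fixed $s$ and $A\in\sB(\sN)$ (so that $(\Psi_t^\varphi)$ is genuinely a Markov process and the integral in (A1) makes sense), and that the family $(P_s)$ is a bona fide transition kernel rather than merely a consistent family of finite-dimensional laws. A secondary technical point is the degenerate endpoint: (A3) forces $t\bullet\zero\deq\zero$, giving $P_s(\zero,\cdot)=\delta_{\zero}$, so the empty configuration is absorbing, consistent with a branching interpretation. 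One should also confirm that the constructed process admits a genuine Markov realisation on path space; this may require invoking a standard Kolmogorov-type extension/regularity argument for the transition semigroup on $(\sN,\sB(\sN))$, possibly restricting, as the preceding subsection notes, to starting configurations $\varphi$ for which the p.g.fl.\ $G_s^\varphi[h]$ stays finite.
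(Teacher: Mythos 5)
Your proposal is correct and follows essentially the same route as the paper's proof: in both directions you define the kernel $P_s(\varphi,\cdot)$ as the law of $e^{-s}\bullet\varphi$, derive Chapman--Kolmogorov from \eqref{eq:A1} and \eqref{eq:associativity}, invoke the Kolmogorov extension theorem to realise the Markov process, and obtain the branching property \eqref{eq:branching_property_particle_systems} from distributivity (the paper phrases this last step via the p.g.fl.\ factorisation \eqref{condition2_eq}, which is equivalent to your convolution statement). The technical caveats you flag (measurability of the kernel, the absorbing empty configuration, admissible starting configurations) are exactly the points the paper handles or implicitly assumes.
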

\begin{proof}
  \textit{Necessity:} Let $\bullet$ satisfy
  \eqref{eq:A1}-\eqref{eq:distributivity}. Let $P_t(\varphi,\cdot)$ denote
  the distribution of $e^{-t}\bullet\varphi$. By putting
  $\Phi=e^{-t_1}\bullet\psi$, $\psi\in\sN$, in \eqref{eq:A1} we obtain
\begin{equation}\label{eq:maximality_proof1}
  \P\big[e^{-t_2}\bullet(e^{-t_1}\bullet\psi)\in A\big]
  =\int_{\sN}P_{t_2}(\varphi,A)P_{t_1}(\psi,d\varphi),
  \qquad A\in\mathcal{B}(\sN),\; t_1,t_2>0.
\end{equation}\
Using the associativity of $\bullet$
(Assumption~\eqref{eq:associativity}) on the left-hand side of
\eqref{eq:maximality_proof1} we obtain the Chapman-Kolmogorov
equations
\begin{equation}\label{eq:CK}
  P_{t_1+t_2}(\varphi,A)\;=\;\int_{\sN}P_{t_1}(\psi,A)P_{t_2}(\varphi,d\psi)
  \qquad A\in\mathcal{B}(\sN),\; t_1,t_2>0.
\end{equation}\
Therefore, by the Kolmogorov extension theorem there exists
 a Markov process $\Psi_t^\varphi$ on $\sN$
having transition kernel $\big(P_t(\varphi,\cdot)\big)_{t\geq 0}$. Let
$\varphi\in\sN\backslash\zero$ and $G_t^\varphi[\cdot]$ be the
p.g.fl.\ of $P_t(\varphi,\cdot)$ for $t>0$. Using the definition of
$P_t(\varphi,\cdot)$ and the distributivity of $\bullet$ (Assumption
\eqref{eq:distributivity}) we obtain
\begin{displaymath}
  G_t^\varphi[h]=
  G_{e^{-t}\bullet\varphi}[h]=
  G_{\sum_{x\in\varphi}e^{-t}\bullet\delta_x}[h]=
  \prod_{x\in\varphi} G_{e^{-t}\bullet\delta_x}[h]=
  \prod_{x\in\varphi} G_t^{\delta_x}[h],\qquad h\in\mathcal{V}(\mathcal{X}).
\end{displaymath}
From distributivity it also follows that
$e^{-t}\bullet\zero\stackrel{\mathcal{D}}=\zero$ and therefore
$G_t^\zero[h]=1$ for any $h\in\mathcal{V}(\mathcal{X})$. Therefore
\eqref{condition2_eq} is satisfied and $\Psi_t^\varphi$ is a Markov
branching process on $\sN$.

\textit{Sufficiency:} Let $(\Psi_t^\varphi)_{t\geq0},\ \varphi\in\sN$ be a
Markov branching process on $\sN$ with transition kernel
$P_t(\varphi,\cdot)$. Consider the operation $\bullet$ induced by
\eqref{passage_operation_branching}, namely
$t\bullet\Phi\stackrel{D}=\Psi_{-\ln t}^\Phi$. Assumption
\eqref{eq:A1} follows by the construction:
\begin{displaymath}
  \P(\Psi_t^\Phi\in A)=\int_{\sN}\P(\Psi_t^\varphi\in
  A)\P_\Phi(d\varphi),\qquad A\in\mathcal{B}(\sN),\ t>0. 
\end{displaymath}
Given $\varphi\in\sN$ and $t_1,t_2\in (0,1]$, using \eqref{eq:A1} and
the Chapman-Kolmogorov equations \eqref{eq:CK} we obtain
\begin{multline*}
\P\left(t_1\bullet(t_2\bullet \varphi)\in A\right)
\;=\;
\int_{\sN} P_{-\ln t_1}(\psi,A)P_{-\ln t_2}(\varphi,d\psi)
\;=\\=\;
P_{-\ln (t_1t_2)}(\varphi,A)
\;=\;
\P((t_1t_2)\bullet \varphi\in A) \qquad A\in\mathcal{B}(\sN),
\end{multline*}\
i.e.\ the associativity \eqref{eq:associativity}) of $\bullet$ holds.

Finally, let $G_t^\varphi[\cdot]$ be the p.g.fl. of $\Psi_t^\varphi$ for
$t\geq0$ and $\varphi\in\mathcal{N}$. Given $t\in(0,1]$ and
$\varphi_1,\varphi_2\in\sN\backslash\zero$, using the independent
branching property \eqref{condition2_eq}, it
follows that
\begin{multline}\label{eq:distrib_pgfl_proof}
  G_{t\bullet(\varphi_1+\varphi_2)}[h] = G_{-\ln
    t}^{\varphi_1+\varphi_2}[h]= \prod_{x\in\varphi_1+\varphi_2}
  G_t^{\delta_x}[h]= \prod_{x\in\varphi_1}
  G_{t}^{\delta_x}[h]\;\prod_{x\in\varphi_2}
  G_{t}^{\delta_x}[h]=\\
   = G_{-\ln t}^{\varphi_1}[h]\;G_{-\ln t}^{\varphi_2}[h]= G_{t\bullet
    \varphi_1}[h]G_{t\bullet\varphi_2}[h]\qquad
  h\in\mathcal{V}(\mathcal{X}).%
\end{multline}\
The distributivity \eqref{eq:distributivity} of $\bullet$ follows from
\eqref{eq:distrib_pgfl_proof} and Remark~\ref{rmk:configurations}.
\end{proof}

\begin{example}[Diffusion] \label{ex:diffusion} Let $(X_t)_{t\geq 0}$
  be a strong time-homogeneous Markov process on $\mathcal{X}$, right
  continuous with left limits. Let $(\Psi_t^\varphi)_{t\geq0,\,\varphi\in\sN}$ be the
  Markov branching process on $\sN$ where every particle moves
  according to an independent copy of $X_t$, without branching (see
  \cite[Sec.~V.1]{AH:83} for a proof that this is indeed a Markov
  branching process on $\sN$). Denote by $\bullet_d$ the associated
  branching operation, $t\bullet_d\Phi\stackrel{D}=\Psi_{-\ln
    t}^\Phi$.  Let $P_t(x,\cdot)$ be the distribution of $X_t^x$,
  where $x$ denotes the starting state, and $P_th(x)=\E h(X_t^x)
  =\int_{\mathcal{X}}h(y)P_t(x,dy)$.  Then, for any $\phi$ in $\sN$ we
  have
\begin{multline}\label{eq:diffusion}
  G_{t\bullet_d\phi}[h]=
  G_{-\ln t}^\phi[h]=
  \prod_{x\in\phi} G_{-\ln t}^{\delta_x}[h]=\\=
  \prod_{x\in\phi}\E h(X_{-\ln t}^x)=
  \prod_{x\in\phi}P_{-\ln t}h(x)=
  G_{\phi}[P_{-\ln t}h]
  \qquad h\in\mathcal{V}(\mathcal{X}).
\end{multline}\
\end{example}

\begin{example}[Diffusion with thinning] \label{ex:thinning_diffusion}
  Let $X_t$ be as in Example~\ref{ex:thinning_diffusion}. Let
  $(\Psi_{t}^{\varphi})_{t\geq0,\,\varphi\in\sN}$ be the Markov Branching
  process on $\sN$, where every particle moves according to an
  independent copy of $X_t$ and after an exponentially
  $\mathrm{Exp}(1)$-distributed time it dies (independently of the other
  particles). We denote by $\bullet_{dt}$ the associated branching
  operation $t\bullet_{dt}\Phi\stackrel{D}=\Psi_{-\ln
    t}^\Phi$. Similarly to \eqref{eq:diffusion}, it is easy
  to show that given a p.p.\ $\Phi$ we
  have
\begin{equation*}
  G_{t\bullet_{dt}\Phi}[h]=G_\Phi[1-t+t(P_{-\ln t}h)],\qquad h\in \mathcal{V}(\mathcal{X}).
\end{equation*}\
This operation acts as the composition of the thinning operation
$\circ$ and the diffusion operation $\bullet_d$ introduced in
Example~\ref{ex:diffusion}, regardless of the order in which these two
operations are applied. For any p.p.\ $\Phi$
\begin{displaymath}
  t\bullet_{dt}\Phi\stackrel{\mathcal{D}}=t\bullet_d(t\circ\Phi)\stackrel{\mathcal{D}}=t\circ(t\bullet_d\Phi).
\end{displaymath}
In fact, since
$1-t+t(P_{-\ln t}h)=P_{-\ln t}(1-t+th)$ for any $h$ in
$\mathcal{V}(\mathcal{X})$, it holds
\begin{multline*}
G_{t\bullet_d(t\circ\Phi)}[h]=
G_{t\circ\Phi}[P_{-\ln t}h]=
G_\Phi[1-t+t(P_{-\ln t}h)]=
G_{t\bullet_{dt}\Phi}[h]=
  \\%
=G_\Phi[P_{-\ln t}(1-t+th)]=
G_{t\bullet_d\Phi}[1-t+th]=
G_{t\circ(t\bullet_d\Phi)}[h].
\end{multline*}\
\end{example}

\subsection{Stability for general branching operations}
Proposition~\ref{maximality_of_the_operation} shows that branching
operations are the only operations on point processes satisfying
assumptions \eqref{eq:A1}-\eqref{eq:distributivity}. Such assumptions,
together with the continuity and subcriticality conditions below, lead to
an appropriate definition of stability, as
Proposition~\ref{definitions_of_stability} below shows.  

\begin{definition}
  We call a branching operation $\bullet$ \emph{continuous} if
  \begin{equation}\tag{A4}\label{eq:continuity}
    t\bullet\varphi\Rightarrow\varphi\quad\hbox{ for }t\uparrow
    1\qquad \text{for every }  \varphi\in\mathcal{N},
  \end{equation}\ 
  where $\Rightarrow$ stands for weak convergence (or equivalently for
  the convergence in Prokhorov metric). Moreover we say that $\bullet$
  is \emph{subcritical} if the associated Markov branching process on
  $\sN$, $\Psi_t^\varphi$, is subcritical, i.e.\ if
  $\E\Psi_t^{\delta_x}(\mathcal{X})<1$ for every $x\in\sX$ and $t>0$.
\end{definition}

\begin{proposition}\label{definitions_of_stability}
  Let $\Phi$ be a p.p.\ on $\mathcal{X}$ with p.g.fl.\ $G_\Phi[\cdot]$
  and $\bullet$ be a subcritical and continuous branching operation
  on $\mathcal{X}$. Then the following conditions are
  equivalent:
  \begin{enumerate}\itemsep0pt
  \item $\forall$ $n\in\mathbb{N}$ $\exists$ $c_n\in(0,1]$ such that
    given $(\Phi^{(1)},...,\Phi^{(n)})$ independent copies of $\Phi$
    \begin{equation}\label{eq:first_defi}
      \Phi \stackrel{\mathcal{D}}= c_n\bullet (\Phi^{(1)}+...+\Phi^{(n)});
    \end{equation}\
  \item $\forall$ $\lambda>0$ $\exists$ $t\in(0,1]$ such that
    \begin{displaymath}
      G_{\Phi}[h]=\big(G_{t\bullet \Phi}[h]\big)^\lambda;
    \end{displaymath}
  \item $\exists$ $\alpha>0$ such that $\forall$ $n\in\mathbb{N}$, given
  $(\Phi^{(1)},...,\Phi^{(n)})$ independent copies of $\Phi$
  \begin{equation}\label{defi_stability3}
    \Phi \stackrel{\mathcal{D}}= (n^{-\frac{1}{\alpha}})\bullet (\Phi^{(1)}+...+\Phi^{(n)});
  \end{equation}\
\item $\exists$ $\alpha>0$ such that $\forall$ $t\in[0,1]$
  \begin{equation}\label{defi_stability4}
    G_{\Phi}[h]=\big(G_{t\bullet \Phi}[h]\big)^{t^{-\alpha}};
  \end{equation}\

\item $\exists$ $\alpha>0$ such that $\forall$ $t\in[0,1]$, given
  $\Phi^{(1)}$ and $\Phi^{(2)}$ independent copies of $\Phi$,
  \begin{equation}\label{defi_stability5}
    t^{1/\alpha}\bullet\Phi^{(1)}+(1-t)^{1/\alpha}\bullet\Phi^{(2)}
    \stackrel{\mathcal{D}}=\Phi. 
  \end{equation}\
\end{enumerate}
\end{proposition}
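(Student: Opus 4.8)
The plan is to recast all five conditions as statements about the probability generating functional, and more conveniently about $\psi[h]:=-\log G_\Phi[h]\in[0,\infty]$. Two identities, both already contained in the excerpt, drive everything. Writing $T_t h(x):=G_{-\log t}^{\delta_x}[h]$ for the one-particle branching transform, Assumption \eqref{eq:A1} together with the branching property \eqref{condition2_eq} give the superposition rule $G_{t\bullet(\Phi_1+\dots+\Phi_n)}[h]=\prod_i G_{t\bullet\Phi_i}[h]$ and the representation $G_{t\bullet\Phi}[h]=G_\Phi[T_t h]$, so that $\psi$ adds over independent superpositions and $-\log G_{t\bullet\Phi}[h]=\psi[T_t h]$; associativity \eqref{eq:associativity} makes $s\mapsto T_{e^{-s}}$ a semigroup, i.e.\ $T_{t_1}T_{t_2}=T_{t_1t_2}$. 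In this language (4) reads $\psi[T_t h]=t^{\alpha}\psi[h]$, condition (2) reads ``for every $\lambda$ there is $t$ with $\psi[h]=\lambda\,\psi[T_t h]$'', conditions (3) and (1) read $\psi[h]=n\,\psi[T_{c_n}h]$ with $c_n=n^{-1/\alpha}$ respectively with some $c_n\in(0,1]$, and (5) reads $\psi[T_{t^{1/\alpha}}h]+\psi[T_{(1-t)^{1/\alpha}}h]=\psi[h]$.

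I would first dispatch the routine implications, which only manipulate a \emph{fixed} exponent $\alpha$ via the two identities above. For \emph{(4)$\Rightarrow$(5)} substitute $t^{1/\alpha}$ and $(1-t)^{1/\alpha}$ into (4), obtaining factors $G_\Phi[h]^{t}$ and $G_\Phi[h]^{1-t}$ whose product is $G_\Phi[h]$. For \emph{(4)$\Rightarrow$(2)}, given $\lambda$ take $t=\lambda^{-1/\alpha}$, so $t^{-\alpha}=\lambda$. For \emph{(4)$\Rightarrow$(3)} take $t=n^{-1/\alpha}$ and read $(G_{n^{-1/\alpha}\bullet\Phi}[h])^{n}=G_{n^{-1/\alpha}\bullet(\Phi^{(1)}+\dots+\Phi^{(n)})}[h]$ off the superposition rule. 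Both \emph{(2)$\Rightarrow$(1)} and \emph{(3)$\Rightarrow$(1)} are immediate (take $\lambda=n$, resp.\ $c_n=n^{-1/\alpha}$). Finally \emph{(5)$\Rightarrow$(3)} is an induction on $n$: given the $n$-term split, apply (5) with $t=n/(n+1)$, expand the $(n/(n+1))^{1/\alpha}$-scaled summand by the inductive hypothesis, and use associativity $(n/(n+1))^{1/\alpha}\bullet(n^{-1/\alpha}\bullet\,\cdot\,)\deq(n+1)^{-1/\alpha}\bullet\,\cdot$ together with distributivity \eqref{eq:distributivity} to reassemble $n+1$ equal pieces each scaled by $(n+1)^{-1/\alpha}$. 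These arrows leave exactly one gap, so the whole proposition reduces to the single implication \emph{(1)$\Rightarrow$(4)}.

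For \emph{(1)$\Rightarrow$(4)} I may assume $\Phi$ is not a.s.\ empty (the empty case is trivial), and I fix $h\in\mathcal{V}(\mathcal{X})$ with $\psi[h]>0$, setting $g(s):=\psi[T_{e^{-s}}h]$ for $s\geq0$. Continuity \eqref{eq:continuity} of $\bullet$ and of the p.g.fl.\ make $g$ continuous, with $g(0)=\psi[h]>0$ and $g(s)\to0$ as $s\to\infty$ because the subcriticality assumption forces a.s.\ extinction, i.e.\ $T_{e^{-s}}h\to1$ and $G_\Phi[1]=1$. Applying (1) not to $h$ but to $T_{e^{-s}}h$ (legitimate, since (1) holds for \emph{all} test functions) and using $T_{c_n}T_{e^{-s}}=T_{c_ne^{-s}}$ gives the key relation $g(s+\gamma_n)=n^{-1}g(s)$ for all $s\geq0$ and all $n$, where $\gamma_n:=-\log c_n\geq0$ (and $\gamma_2>0$, since $c_2=1$ would force $\psi\equiv0$). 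Once one knows $g>0$ on $[0,\infty)$, I pass to $\ell:=-\log g$, which is continuous and satisfies $\ell(s+\gamma_n)=\ell(s)+\log n$; crucially this route avoids any monotonicity of $g$, which need not hold when $\bullet$ includes a diffusion component. Writing $\mu(s):=\ell(s)-\ell(0)-(\log 2/\gamma_2)\,s$, the $n=2$ relation makes $\mu$ continuous and $\gamma_2$-periodic, hence bounded; the $n=3$ relation then adds a constant increment $\log3-(\log2/\gamma_2)\gamma_3$ over shifts by $\gamma_3$, and boundedness forces that increment to vanish, i.e.\ $\gamma_3/\log3=\gamma_2/\log2=:\beta$. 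Thus $\mu$ has the two incommensurable periods $\gamma_2=\beta\log2$ and $\gamma_3=\beta\log3$ and, being continuous, is constant. Hence $\ell(s)=\ell(0)+s/\beta$, i.e.\ $g(s)=g(0)e^{-s/\beta}$; equivalently $\psi[T_t h]=t^{\alpha}\psi[h]$ with $\alpha:=1/\beta>0$. Since $\alpha=1/\beta$ is read off the global constant $c_2$ it is the same for every $h$, and the identity is trivial when $\psi[h]=0$; this is (4), and incidentally $\gamma_n=\beta\log n$ recovers $c_n=n^{-1/\alpha}$.

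The main obstacle is the strict positivity $g(s)>0$ for all finite $s$, which is what licenses passing to $\ell=-\log g$ and is precisely where subcriticality is indispensable: in the general setting $\bullet$ may transport points by diffusion, so $g$ is neither monotone nor obviously bounded away from the extinction value, and one must argue from the genuine death rate ($\E\Psi_t^{\delta_x}(\mathcal{X})<1$) that the branching process has positive survival probability over every finite horizon (keeping positive mass where $h<1$) while still dying out in the limit, so that $g$ decays to $0$ yet never vanishes at finite times. Propagating a hypothetical zero of $g$ by the relation $g(s-\gamma_n)=n\,g(s)$ and invoking continuity gives a clean contradiction with $g(0)>0$ once one controls the step sizes $\gamma_n$. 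Granting this positivity, the remaining functional-equation analysis is classical, using only continuity and the irrationality of $\log2/\log3$.
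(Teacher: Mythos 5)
Your reduction of the proposition to the single implication (1)$\Rightarrow$(4), and the routine arrows themselves, are fine and essentially match the paper; the interesting divergence is your treatment of (1)$\Rightarrow$(4) by a functional equation for $g(s)=\psi[T_{e^{-s}}h]$, whereas the paper first proves uniqueness and strict monotonicity of the constants $c_n$ from subcriticality, deduces $c_{nm}=c_nc_m$, solves the multiplicative Cauchy equation for the monotone function $c$ on the rationals, and extends to the reals. However, your argument has a genuine gap exactly where you flag it: the strict positivity $g(s)>0$. The fix you sketch --- propagating a hypothetical zero via $g(s-\gamma_n)=n\,g(s)$ --- does not produce a contradiction: if $s^*=\inf\{s\geq 0: g(s)=0\}$, then (using closedness of the zero set) that relation only shows $\gamma_n>s^*$ for every $n\geq 2$, which is perfectly consistent, since nothing bounds the $\gamma_n$ from above or makes them generate a dense set; ``controlling the step sizes $\gamma_n$'' amounts to knowing $\gamma_n=\beta\log n$, i.e.\ to the very conclusion being proved, so the sketch is circular. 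The irony is that the logarithm causing the problem is unnecessary: setting $\omega(s)=e^{(\log 2/\gamma_2)s}\,g(s)$, the $n=2$ relation makes $\omega$ $\gamma_2$-periodic, the $n=3$ relation gives $\omega(s+\gamma_3)=K\omega(s)$, and boundedness plus $\omega(0)>0$ force $K=1$ and then constancy of $\omega$; this needs no positivity at all and yields it a posteriori. Your argument can be repaired along these lines, but as written the central analytic step is missing.

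There is a second, unacknowledged gap: the claim that \eqref{eq:continuity} makes $g$ continuous. Assumption \eqref{eq:continuity} is one-sided ($t\uparrow 1$): combined with \eqref{eq:A1} it gives $g(s+\epsilon)\to g(s)$ as $\epsilon\downarrow 0$, because $e^{-(s+\epsilon)}\bullet\Phi$ arises by applying $e^{-\epsilon}\bullet$ to the \emph{fixed} process $e^{-s}\bullet\Phi$; but left-continuity would require the laws of $e^{-(s-\epsilon)}\bullet\Phi$ to converge to that of $e^{-s}\bullet\Phi$, and here the small perturbation $e^{-\epsilon}\bullet$ is applied to a \emph{varying} family of processes, so \eqref{eq:continuity} does not apply and $\bullet$ is not invertible. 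Your periodicity argument genuinely needs two-sided continuity (a merely right-continuous periodic function need not even be bounded on a period, so the ``periodic hence bounded'' step collapses), while the paper's proof is deliberately organized so that the only limits ever taken are of the form $t_n\bullet\bigl(x^{-1/\alpha}\bullet\Phi\bigr)$ with $t_n\uparrow 1$, which is precisely what \eqref{eq:continuity} licenses. So both delicate points of your route --- positivity and continuity --- are where the paper's monotonicity-based argument does work that your write-up leaves undone.
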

\begin{proof}
  If $\Phi\equiv\zero$ then all the conditions are trivially
  satisfied. So we suppose $\Phi\not\equiv\zero$.
  $\textit{4)}\Rightarrow \textit{2)}\Rightarrow \textit{1)}$ are
  obvious implications. So if one proves
  \textit{1)}$\Rightarrow$\textit{4)} then \textit{1)}, \textit{2)}
  and \textit{4)} are equivalent.

  To show \textit{1)}$\Rightarrow$\textit{4)} note that, given
  $n\in\mathbb{N}$, the coefficient $c_n$ satisfying
  \eqref{eq:first_defi} is unique. In fact if $c_n$ and $t\,c_n$ both
  satisfy \eqref{eq:first_defi}, with $t\in(0,1)$, by associativity it
  follows 
\begin{equation*}
\Phi
\deq
(tc_n)\bullet (\Phi^{(1)}+...+\Phi^{(n)})
\deq
t\bullet\left(c_n\bullet (\Phi^{(1)}+...+\Phi^{(n)})
\right)
\deq
t\bullet\Phi
\end{equation*}\
and thus, because of subcriticality, $t=1$.
  Using
  \eqref{eq:first_defi} and the distributivity and associativity of
  $\bullet$ we obtain that, given $m,n\in\mathbb{N}$,
\begin{multline*}
  \Phi \;\stackrel{\mathcal{D}}=
  c_n\bullet (\Phi^{(1)}+...+\Phi^{(n)})\deq\\
  \deq c_n\bullet \big(c_m\bullet 
  (\Phi^{(1)}+...+\Phi^{(m)})+...+c_m\bullet
  (\Phi^{(n-1)m+1}+...+\Phi^{(nm)})\big)\\
  \deq(c_nc_m)\bullet (\Phi^{(1)}+...+\Phi^{(nm)}),
\end{multline*}\
which implies that
\begin{equation}\label{cncm}
c_{nm}=c_nc_m.
\end{equation}\
Since we are considering the subcritical case, we have
\begin{equation}\label{cn_smaller_then_cm}
  n>m \Rightarrow c_n<c_m.
\end{equation}\
For every $1\leq m\leq n<+\infty$, $m,n\in\mathbb{N}$ define a
function $c:[1,+\infty)\cap\mathbb{Q}\rightarrow (0,1]$ by setting
\begin{equation}\label{defi_c_function}
  c\Big(\frac{n}{m}\Big):=\frac{c_n}{c_m}.
\end{equation}\
The function $c$ is well defined because of \eqref{cncm} 
and it takes values in $(0,1]$ because of \eqref{cn_smaller_then_cm}.
Using associativity, distributivity and \eqref{eq:first_defi},
\begin{multline}
  \big(G_{\frac{c_n}{c_m}\bullet\Phi}[h]\big)^{\frac{n}{m}}=
  \big(G_{\frac{c_n}{c_m}\bullet\big(c_m\bullet(\Phi^{(1)}+...+\Phi^{(m)})\big)}[h]\big)^{\frac{n}{m}}\;=\\=\;
  \big(G_{c_n\bullet(\Phi^{(1)}+...+\Phi^{(m)})}[h]\big)^{\frac{n}{m}}=
  \Big(\big(G_{c_n\bullet\Phi}[h]\big)^m\Big)^{\frac{n}{m}}=
  \big(G_{c_n\bullet\Phi}[h]\big)^{n}= G_\Phi[h].
\end{multline}\
Therefore
\begin{equation}\label{rational_case}
  G_\Phi[h]=\big(G_{c(x)\bullet\Phi}[h]\big)^{x}\quad\forall x\in[1,+\infty)\cap\mathbb{Q}.
\end{equation}\
It follows from \eqref{cn_smaller_then_cm} and \eqref{defi_c_function}
that $c$ is a strictly decreasing function on
$[1,+\infty)\cap\mathbb{Q}$. Therefore we can be extended to the whole
$[1,+\infty)$ by putting
\begin{displaymath}
  c(x):=\inf\{c(y):\ y\in[1,x]\cap\mathbb{Q}\}.
\end{displaymath}

From \eqref{cncm} and \eqref{cn_smaller_then_cm}, taking limits over
rational numbers, if follows that $c(xy)=c(x)c(y)$ for every
$x,y\in[1,+\infty)$. The only monotone functions $c$ from
$[1,+\infty)$ to $(0,1]$ such that $c(0)=1$ and $c(xy)=c(x)c(y)$ for
every $x,y\in[1,+\infty)$ are $c(x)=x^r$ for some $r\in\mathbb{R}$. Since
our function is decreasing then $r<0$. Let $\alpha>0$ be such that
$r=-1/\alpha$. Fix $x\in[1,+\infty)$ and let
$\{x_n\}_{n\in\mathbb{N}}\subset[1,+\infty)\cap\mathbb{Q}$ be such
that $x_n\downarrow x$ as $n\rightarrow +\infty$, and therefore
$x_n^{-1/\alpha}\uparrow x^{-1/\alpha}$ as
$n\rightarrow +\infty$. Since $\bullet$ is left-continuous in the weak
topology (assumption \eqref{eq:continuity}) it holds that
\begin{displaymath}
  x_n^{-1/\alpha}\bullet\Phi\Rightarrow
  x^{-1/\alpha}\bullet
\Phi\qquad n\rightarrow +\infty,
\end{displaymath}
which implies 
\begin{displaymath}
  G_{x_n^{-1/\alpha}\bullet\Phi}[h]
\quad\stackrel{n\rightarrow\infty}\longrightarrow\quad
G_{x^{-1/\alpha}\bullet
\Phi}[h]
\qquad \forall\; h\in \mathcal{V}(\mathcal{X}).
\end{displaymath}
From \eqref{rational_case} we have
\begin{displaymath}
  \big(G_\Phi[h]\big)^{1/x}=\lim_{n\rightarrow
    +\infty}G_{c(x_n)\bullet\Phi}[h]
  =\lim_{n\rightarrow +\infty}G_{x_n^{-1/\alpha}\bullet\Phi}[h]
\qquad \forall\; h\in \mathcal{V}(\mathcal{X}),
\end{displaymath}
and therefore we obtain \eqref{defi_stability4} as desired.

$\textit{4)}\Rightarrow \textit{3)}\Rightarrow\textit{1)}$ are obvious
implications and thus \textit{3)} is equivalent to \textit{1)},
\textit{2)} and \textit{4)}.

To show \textit{4) $\Rightarrow$ 5)} take $x,y\in[1,+\infty)$. Then, because
of \textit{4)},
\begin{multline}\label{eq:splitting}
  G_\Phi[h]=G_{(x+y)^{-1/\alpha}\bullet\Phi}[h]^{x+y}
  =G_{x^{-1/\alpha}\big(\frac{x+y}{x}\big)^{-1/\alpha}\bullet\Phi}[h]^x
  \cdot G_{y^{-1/\alpha}\big(\frac{x+y}{y}\big)^{-1/\alpha}\bullet\Phi}[h]^y=\\
  =G_{\big(\frac{x+y}{x}\big)^{-1/\alpha}\bullet\Phi}[h]
  \cdot G_{\big(\frac{x+y}{y}\big)^{-1/\alpha}\bullet\Phi}[h]=
  G_{\big(\frac{x+y}{x}\big)^{-1/\alpha}\bullet\Phi
    +\big(\frac{x+y}{y}\big)^{-1/\alpha}\bullet\Phi'}[h],
\end{multline}\
where $\Phi'$ is an independent copy of $\Phi$. Then \textit{5)}
follows since $x,y\in[1,+\infty)$ are arbitrary.

\textit{5)$\Rightarrow$3)}. \eqref{defi_stability3} can be obtained 
iterating \eqref{defi_stability5} n-1 times.
\end{proof}

Section~\ref{sec:general_branching_stability} shows that branching
operations are the most general class of associative and distributive
operations that can be used to study stability for point
processes. Therefore the following definition generalises all the
notions of discrete stability considered so far.
\begin{definition}
  Let $\Phi$ and $\bullet$ be as in
  Proposition~\ref{definitions_of_stability}. If
  \eqref{defi_stability5} is satisfied we say that $\Phi$ is strictly
  $\alpha$-stable with respect to $\bullet$ or, simply, \emph{branching-stable}.
\end{definition}

In this paper we do not provide a characterisation of stable point
processes with respect to a general branching operation. Instead, next we
consider some specific cases that point towards directions to obtain
such a characterisation in full generality.  The main idea is that, given a
branching operation $\bullet$ acting on point processes, there is a
corresponding branching operation $\odot$ acting on random measures
such that stable point processes with respect to $\bullet$ are Cox
processes driven by stable random measures with respect to $\odot$.

\subsection{Stability with respect to thinning and diffusion}\label{sec:thinning_diffusion}
\paragraph{Cox characterisation}
Recall that \das\ point processes are Cox processes driven by \sas\
intensity measures, see Section~\ref{sec:das_pp}. The main reason for
this is that the thinned version of a Poisson p.p.\ with intensity
measure $\mu$, $\Pi_\mu$, is itself a Poisson p.p.\ with intensity
measure $t\mu$, i.e.\
$t\circ\Pi_\mu\stackrel{\mathcal{D}}=\Pi_{t\mu}$.
The same holds for a Cox p.p.\ $\Pi_\xi$ driven by a random measure $\xi$:
\begin{equation}\label{eq:thinning_multiplication}
  t\circ\Pi_\xi\stackrel{\mathcal{D}}=\Pi_{t\xi}. 
\end{equation}\

For the thinning and diffusion operation $\bullet_{dt}$ of Example \ref{ex:thinning_diffusion},
\begin{multline}
  G_{t\bullet_{dt}\Pi_\mu}[h]=G_{\Pi_\mu}[1-t+tP_{-\ln t}h]
  =\exp\{-\langle1-(1-t+tP_{-\ln t}h),\mu\rangle\}=\\
  =\exp\{-\langle tP_{-\ln t}(1-h),\mu\rangle\}=\exp\{-\langle
  1-h,tP^{*}_{-\ln t}\mu\rangle\}=G_{\Pi_{tP^{*}_{-\ln
        t}\mu}}[h],\label{eq:td_pgfl}
\end{multline}\
where $P^*_{-\ln t}$ is the adjoint to the linear operator $P_{-\ln t}$. If we
denote by $\odot_{dt}$ the following
operation
\begin{equation}
\begin{split}
\odot_{dt}:(0,1]\times\mathcal{M}& \rightarrow \mathcal{M}\\
(t,\mu)\quad & \rightarrow t\odot_{dt}\mu\bydef tP^{*}_{-\ln t}\mu,
\end{split}
\end{equation}\
then \eqref{eq:td_pgfl} implies
$t\bullet_{dt}\Pi_\mu\deq \Pi_{t\odot_{dt}\mu}$. Similarly
for Cox processes,
\begin{equation}\label{eq:td_convmult}
t\bullet_{dt}\Pi_\xi\deq \Pi_{t\odot_{dt}\xi},
\end{equation}\
where the operation $\odot_{dt}$ acts on each realisation of $\xi$.
The analogy between \eqref{eq:thinning_multiplication} and \eqref{eq:td_convmult}
suggests the following result.

\begin{proposition}\label{prop:cox_characterization_for diffusion}
A point process $\Phi$ is strictly $\alpha$-stable with respect to
$\bullet_{dt}$ if and only if it is a Cox process $\Pi_\xi$ with an
intensity measure being strictly $\alpha$-stable with respect to
$\odot_{dt}$. 
\end{proposition}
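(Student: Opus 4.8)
The plan is to exploit the defining relation $t\bullet_{dt}\Pi_\xi \deq \Pi_{t\odot_{dt}\xi}$ established in \eqref{eq:td_convmult}, which translates the branching operation on point processes into the corresponding operation $\odot_{dt}$ on intensity measures, and to combine it with the stability equation \eqref{defi_stability5} characterising $\alpha$-stability.

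For the \emph{sufficiency} direction, suppose $\xi$ is strictly $\alpha$-stable with respect to $\odot_{dt}$ and let $\Phi=\Pi_\xi$ be the Cox process it drives. Given $t\in[0,1]$, take independent copies $\xi'$ and $\xi''$ of $\xi$ and note that $\Phi^{(1)}=\Pi_{\xi'}$ and $\Phi^{(2)}=\Pi_{\xi''}$ are independent copies of $\Phi$. Applying \eqref{eq:td_convmult} to each term, I would compute
\begin{equation*}
  t^{1/\alpha}\bullet_{dt}\Phi^{(1)}+(1-t)^{1/\alpha}\bullet_{dt}\Phi^{(2)}
  \deq
  \Pi_{t^{1/\alpha}\odot_{dt}\xi'}+\Pi_{(1-t)^{1/\alpha}\odot_{dt}\xi''}.
\end{equation*}
Since the superposition of two independent Cox processes driven by $\eta_1$ and $\eta_2$ is again a Cox process driven by $\eta_1+\eta_2$, the right-hand side equals in distribution $\Pi_{\,t^{1/\alpha}\odot_{dt}\xi'+(1-t)^{1/\alpha}\odot_{dt}\xi''}$. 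By the $\odot_{dt}$-stability of $\xi$ (the measure analogue of \eqref{defi_stability5}, $t^{1/\alpha}\odot_{dt}\xi'+(1-t)^{1/\alpha}\odot_{dt}\xi''\deq\xi$), the driving measure equals $\xi$ in distribution, so the whole expression is $\deq\Pi_\xi=\Phi$, giving \eqref{defi_stability5} for $\Phi$.

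For the \emph{necessity} direction the argument must run through \pgfl's and recover the Cox structure; this is where I expect the main difficulty. Starting from a $\bullet_{dt}$-stable $\Phi$, I would use \eqref{defi_stability4} together with the explicit form of $G_{t\bullet_{dt}\Phi}[h]=G_\Phi[1-t+tP_{-\ln t}h]$ from Example~\ref{ex:thinning_diffusion} to derive a functional equation for $G_\Phi$. The key observation is that the thinning component of $\bullet_{dt}$ forces infinite divisibility and, as in the \das\ theory of Section~\ref{sec:das_pp}, lets one write $G_\Phi[h]=\exp\{-\Psi[1-h]\}$ for some functional $\Psi$; establishing that $\Phi$ is a \emph{Cox} process (rather than a general infinitely divisible p.p.) is exactly the content that makes this direction nontrivial. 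The cleanest route is to define a candidate random measure via its Laplace functional $L_\xi[u]\bydef G_\Phi[1-u]$ (mirroring the construction in the proof of Theorem~\ref{teo15}, where $G_\Phi[A^{-1}(u)]$ was shown to be a Laplace functional through Schoenberg/Bochner positive-definiteness arguments) and then verify that this $\xi$ is $\odot_{dt}$-stable by transcribing the stability identity for $G_\Phi$ into one for $L_\xi$. The obstacle is twofold: first, showing that $L_\xi$ so defined is genuinely the Laplace functional of a random measure (which requires the positive-definiteness and continuity checks analogous to those invoked via \cite[Theorem 9.4.II]{DVJ2} in Theorem~\ref{teo15}), and second, correctly handling the adjoint diffusion operator $P^*_{-\ln t}$ when pushing the identity \eqref{eq:td_pgfl} backwards, since one must confirm that the functional identity $G_\Phi[1-t+tP_{-\ln t}h]=L_\xi[tP^*_{-\ln t}(1-h)]=L_\xi[t\odot_{dt}(1-h)]$ holds at the level of the recovered $\xi$ and not merely for Cox processes by construction. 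Once $L_\xi$ is identified as a Laplace functional and the stability identity \eqref{defi_stability5} for $\Phi$ is rewritten through \eqref{eq:td_convmult} as the $\odot_{dt}$-stability of $\xi$, the proof closes.
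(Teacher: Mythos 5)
Your proposal follows essentially the same route as the paper's proof in both directions: the sufficiency argument via \eqref{eq:td_convmult} and superposition of independent Cox processes is exactly the paper's computation, and for necessity the paper does precisely what you outline---it defines $L[u]$ from $G_\Phi$ using the iterated stability relation \eqref{defi_stability3} (so that $1-m^{-1/\alpha}P_{\log m/\alpha}u$ is a legitimate argument of $G_\Phi$ even when $u>1$, resolving the domain issue you flag), invokes the Theorem~\ref{teo15} machinery to conclude $L$ is a Laplace functional, and then verifies $\odot_{dt}$-stability through the adjoint identity $L_{t\odot_{dt}\xi}[u]=L_\xi[tP_{-\ln t}u]=L_\xi[u]^{t^\alpha}$. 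The two obstacles you name are exactly the two steps the paper handles, and in the way you anticipate.
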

\begin{proof}
  \emph{Sufficiency}. Suppose $\xi$ is strictly $\alpha$-stable with
  respect to $\odot_{dt}$. Then using \eqref{eq:td_convmult} and the
  stability of $\xi$
\begin{multline*}
  t^{1/\alpha}\bullet_{dt}\Pi'_\xi+(1-t)^{1/\alpha}\bullet_{dt}\Pi''_\xi
  \stackrel{\mathcal{D}}=
  \Pi'_{t^{1/\alpha}\odot_{dt}\xi}+\Pi''_{(1-t)^{1/\alpha}\odot_{dt}\xi}
  \stackrel{\mathcal{D}}=
  \Pi_{t^{1/\alpha}\odot_{dt}\xi'+(1-t)^{1/\alpha}\odot_{dt}\xi''}
  \stackrel{\mathcal{D}}= \Pi_{\xi},
\end{multline*}\
where the $\Pi_\xi'$ and $\Pi_\xi''$ are independent copies of
$\Pi_\xi$. Therefore $\Pi_\xi$ is strictly $\alpha$-stable with respect to $\bullet_{dt}$.\\
\emph{Necessity}. Suppose $\Phi$ is strictly $\alpha$-stable with
respect to $\bullet_{dt}$. From \eqref{defi_stability3} we have that
for any positive integer $m$
\begin{equation}\label{eq:g_phi_iter}
  G_\Phi[h]=\left(G_{m^{-1/\alpha}\bullet_{dt}\Phi}[h]\right)^m
  =\left(G_{\Phi}[1-m^{-1/\alpha}+m^{-1/\alpha}
    P_{\frac{\log m}{\alpha}}h]\right)^m,\qquad h\in\sV(\sX).
\end{equation}\
We need to show that
$G_{\Phi}[1-u]$, as a functional of $u\in BM_+(\sX)$,
is the Laplace functional of a random measure $\xi$ that is strictly
$\alpha$-stable with respect to $\odot_{dt}$. Since $1-u$ may not take
values in $[0,1]$, the expression $G_{\Phi}[1-u]$ may not
be well defined. Thus we use \eqref{eq:g_phi_iter} and define $L[u]$
as
\begin{equation}\label{eq:l_defi_2}
L[u]=\left(G_{\Phi}[1-m^{-1/\alpha}P_{\frac{\log m}{\alpha}}u]\right)^m,\qquad u\in BM_+(\sX),
\end{equation}\
noting that for some $m$ big enough
$1-m^{-1/\alpha}P_{\frac{\log m}{\alpha}}u$ takes values in $[0,1]$
and the right-hand side of \eqref{eq:l_defi_2} is well
defined. Arguing as in the proof of Theorem 3.5 one can prove that $L$ is the Laplace functional of a random measure
$\xi$, $L_\xi$. Finally, $\xi$ is strictly $\alpha$-stable with
respect to $\odot_{dt}$ because for any $u$ in $BM_+(\sX)$
\begin{multline*}
L_{t\odot_{dt}\xi}[u]=
\E\exp\{-\langle u,t\odot_{dt}\xi\rangle\}=
\E\exp\{-\langle u,tP^*_{-\ln t}\xi\rangle\}=\\
=\E\exp\{-\langle tP_{-\ln t}u,\xi\rangle\}=
L_{\xi}[tP_{-\ln t}u],
\end{multline*}\
and, supposing $u\leq t^{-1}$, and thus $(1-tP_{-\ln t}u)\in\mathcal{V}(\mathcal{X})$, we have
\begin{multline}\label{eq:sas_dt}
L_{\xi}[tP_{-\ln t}u]
\,=\,
G_{\Phi}[1-tP_{-\ln t}u]
\,=\,
G_{\Phi}[1-t+tP_{-\ln t}(1-u)]
\,=\\
G_{t\bullet_{dt}\Phi}[1-u]
\,=\,
G_{\Phi}[1-u]^{t^{\alpha}}
\,=\,
L_{\xi}[u]^{t^{\alpha}}.
\end{multline}\
Similar calculations also apply to the general definition of $L[u]$ in \eqref{eq:l_defi_2}, which includes the case $u> t^{-1}$.
The fact that $\xi$ is strictly $\alpha$-stable with respect to $\odot_{dt}$ follows from \eqref{eq:sas_dt} arguing, for example, as in \eqref{eq:splitting}.
\end{proof}

\paragraph{Levy characterisation and spectral decomposition}\label{sec:spectral_decomposition}
Proposition \ref{prop:cox_characterization_for diffusion}
characterises stable p.p.'s with respect to $\bullet_{dt}$ as Cox
processes driven by stable random measures with respect to
$\odot_{dt}$.  In this section we describe stable random measures with
respect to $\odot_{dt}$ in terms of homogeneous Levy measures (with
respect to $\odot_{dt}$) and we show how to decompose such homogeneous
Levy measures into a spectral and a radial component.

Given $A\in\mathcal{B}\big(\mathcal{M}\big)$ and $t\in(0,1]$ we define
$t\odot_{dt} A=\{t\odot_{dt}\mu:\hspace{1.5mm}\mu\in A\}$.
The idea is to look for homogeneous Levy measures of order $\alpha$
with respect to $\odot_{dt}$, meaning that for any $A$ in
$\mathcal{B}\big(\mathcal{M}\big)$
\begin{equation}\label{eq:new_homogeneity}
\Lambda(t\odot_{dt} A)=t^{-\alpha}\Lambda(A)\quad\forall t\in(0,1].
\end{equation}\

Let us consider Laplace functionals of the form
\begin{equation}\label{eq:levy_characterization}
L[h]=\exp\left\{
-\int_{\sM\backslash\{0\}} \left(1-e^{-\langle h,\mu \rangle}\right)\Lambda(d\mu)
\right\},\qquad h\in BM_+(\sX),
\end{equation}\
where $\Lambda$ is a Radon measure on $\sM\backslash\{0\}$ such that
\begin{equation}
\int_{\sM\backslash\{0\}}\left(1-e^{-\langle h,\mu \rangle}\right)\Lambda(d\mu)<\infty,
\end{equation}\
for any $h$ in $BM_+(\sX)$, and \eqref{eq:new_homogeneity} holds for
any $A$ in $\mathcal{B}\big(\mathcal{M}\big)$. Arguing as in the proof
of Theorem~2 of \cite{DavMolZuy:11}, it can be seen that
\eqref{eq:levy_characterization} defines the Laplace functional of a
random measure, say $\xi$. Then, defining $L_\xi[h]$ as in
\eqref{eq:levy_characterization}, from \eqref{eq:new_homogeneity} it
follows that
\begin{equation*}
L_{t\odot_{dt}\xi}[h]=\exp\left\{
-\int_{\sM\backslash\{0\}} \left(1-e^{-\langle h,\mu \rangle}\right)t^{-\alpha}\Lambda(d\mu)
\right\}=L_{\xi}[h]^{t^{-\alpha}},\qquad h\in BM_+(\sX),
\end{equation*}\
which means that $\xi$ is $\alpha$-stable with respect to $\odot_{dt}$ by an argument analogous to the one in \eqref{eq:splitting}.

We now show how to decompose Levy measures satisfying
\eqref{eq:new_homogeneity} in a radial component (uniquely determined
by $\alpha$) and a spectral component. Such a spectral decomposition
depends on the operation $\odot_{dt}$ and thus it is not the one used
in Section \ref{sec:das_pp} for thinning-stable point processes. For
simplicity we restrict ourselves to the case where $\sM$ is the space
of finite measures on $\sX=\R^n$ for some $n$ and the the diffusion
process $P_t$ is a Brownian motion, meaning that given $\mu$ in $\sM$
and $t$ in $(0,1]$, the measure $t\odot_{dt}\mu$
is 
\begin{displaymath}
    t\odot_{dt}\mu\;=\;
    t\,\nu_t*\mu,
\end{displaymath}
where $*$ denotes the convolution of measures and $\nu_t$, for $t$ in
$(0,1)$, has the following density with respect to the Lebesgue
measure
\begin{displaymath}
  \frac{d\nu_t}{d\ell}=f_t(x)
  =\frac{1}{(2\pi\ln t)^{\frac{n}{2}}}\exp\left\{\frac{|x|^2}{-2\ln t}\right\},
\end{displaymath}
while for $t=0$, $\nu_t$ equals $\delta_{\textbf{0}}$, with
$\textbf{0}$ being the origin of $\R^n$.

We now show that $\sM\backslash\{0\}$ can be
decomposed as $\widetilde{\SS}\times(0,1]$, for the following $\widetilde{\SS}$
\begin{equation*}
  \widetilde{\SS}:=\{\mu\in\sM\backslash\{0\}:
  \hspace{1.5mm}\nexists (t,\rho )\in(0,1)\times\sM\backslash\{0\}
  \hbox{ such that } t\odot_{dt}\rho =\mu\}.
\end{equation*}\

Note that $\widetilde{\SS}\in\sB(\sM)$ because
$\widetilde{\SS}=\cup_{t\in(0,1)\cap
  \mathbb{Q}}t\odot_{dt}(\sM\backslash\{0\})$
and $t\odot_{dt}(\sM\backslash\{0\})\in\sB(\sM)$ for any $t$ in
$(0,1)\cap \mathbb{Q}$. In fact if
$A\in\mathcal{B}\big(\mathcal{M}\big)$, then also
$t\odot_{dt} A\in\mathcal{B}\big(\mathcal{M}\big)$ because the map
$\mu\rightarrow t\odot_{dt}\mu$ is injective (see Proposition
\ref{prop:spectral_decomp} below) and therefore the image of a Borel set is
still Borel (\seg Section 15.A of \cite{Kechris1995}).

\begin{proposition}\label{prop:spectral_decomp}
The map
\begin{equation}
\begin{split}
(0,1]\times\widetilde{\SS}& \mapsto \sM\backslash\{0\}\\
(t,\mu_0)\quad & \mapsto t\odot_{dt} \mu_0,
\end{split}
\end{equation}\
is a bijection.
\end{proposition}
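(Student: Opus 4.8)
The plan is to read the map as a \emph{polar-type decomposition} of $\sM\setminus\{0\}$ under the monoid action $\odot_{dt}$, with $\widetilde{\SS}$ playing the role of a cross-section (the ``irreducible'' measures). Two elementary properties of $\odot_{dt}$ drive everything, and I would record them first. In the present finite-measure setting on $\R^n$, write $\chi_\mu(\omega)=\int_{\R^n}e^{\mathrm{i}\,\omega\cdot x}\,\mu(dx)$ for the characteristic function. Since $\nu_t$ is the Gaussian law with covariance $(-\log t)I$, we have $\chi_{\nu_t}(\omega)=t^{|\omega|^2/2}$, which never vanishes, and the Gaussians form a convolution semigroup $\nu_{t_1}*\nu_{t_2}=\nu_{t_1t_2}$. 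Consequently $\odot_{dt}$ is \emph{associative}, $t_1\odot_{dt}(t_2\odot_{dt}\mu)=(t_1t_2)\odot_{dt}\mu$, exactly as $\bullet_{dt}$ was shown to be; and for each fixed $t\in(0,1]$ the map $\mu\mapsto t\odot_{dt}\mu=t\,\nu_t*\mu$ is \emph{injective}, because $\chi_{t\odot_{dt}\mu}(\omega)=t\,t^{|\omega|^2/2}\chi_\mu(\omega)$ with $t^{|\omega|^2/2}>0$ determines $\chi_\mu$ and hence $\mu$. (This single-variable injectivity is the one invoked just before the statement to ensure $t\odot_{dt}A$ is Borel.) I would then treat injectivity and surjectivity of the two-variable map separately.

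\textit{Injectivity.} Suppose $t_1\odot_{dt}\mu_1=t_2\odot_{dt}\mu_2$ with $\mu_1,\mu_2\in\widetilde{\SS}$ and, without loss of generality, $t_1\le t_2$. Writing $t_1=t_2\cdot(t_1/t_2)$ with $t_1/t_2\in(0,1]$ and using associativity, $t_2\odot_{dt}\mu_2=t_1\odot_{dt}\mu_1=t_2\odot_{dt}\big((t_1/t_2)\odot_{dt}\mu_1\big)$, so single-variable injectivity gives $\mu_2=(t_1/t_2)\odot_{dt}\mu_1$. If $t_1/t_2<1$ this exhibits $\mu_2$ as $(t_1/t_2)\odot_{dt}\mu_1$ with $\mu_1\ne0$, contradicting $\mu_2\in\widetilde{\SS}$; hence $t_1=t_2$, and injectivity of $t_1\odot_{dt}(\cdot)$ forces $\mu_1=\mu_2$.

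\textit{Surjectivity.} Fix $\mu\in\sM\setminus\{0\}$ and set $T(\mu)=\{t\in(0,1]:\mu\in t\odot_{dt}(\sM\setminus\{0\})\}$. Then $1\in T(\mu)$, and associativity shows $T(\mu)$ is an interval with right endpoint $1$: if $\mu=t\odot_{dt}\rho$ and $t\le t'\le1$, then $\mu=t'\odot_{dt}\big((t/t')\odot_{dt}\rho\big)$. I claim $t^*:=\inf T(\mu)$ is attained, with the associated measure lying in $\widetilde{\SS}$. First, $t^*>0$: if $\mu=t\odot_{dt}\rho$, then comparing characteristic functions and using $|\chi_\rho(\omega)|\le\chi_\rho(\mathbf 0)=\mu(\R^n)/t$ gives $|\chi_\mu(\omega)|\le t^{|\omega|^2/2}\mu(\R^n)$; choosing any $\omega_0\ne\mathbf 0$ with $\chi_\mu(\omega_0)\ne0$ (possible since $\chi_\mu$ is continuous with $\chi_\mu(\mathbf 0)=\mu(\R^n)>0$) bounds $t$ away from $0$ uniformly over $T(\mu)$. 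For attainment, take $t_n\downarrow t^*$ in $T(\mu)$ with $\mu=t_n\odot_{dt}\rho_n$; then $\chi_{\rho_n}(\omega)=t_n^{-1}t_n^{-|\omega|^2/2}\chi_\mu(\omega)\to(t^*)^{-1}(t^*)^{-|\omega|^2/2}\chi_\mu(\omega)=:\psi(\omega)$ pointwise, $\psi$ continuous with $\psi(\mathbf 0)=\mu(\R^n)/t^*>0$. Normalising by the total masses $\rho_n(\R^n)=\mu(\R^n)/t_n$ and applying L\'evy's continuity theorem yields a finite measure $\rho^*$ with $\chi_{\rho^*}=\psi$, and comparing characteristic functions once more gives $\mu=t^*\odot_{dt}\rho^*$, i.e.\ $t^*\in T(\mu)$. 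Setting $\mu_0:=\rho^*$, if $\mu_0\notin\widetilde{\SS}$ then $\mu_0=s\odot_{dt}\sigma$ for some $s\in(0,1)$, $\sigma\ne0$, whence $\mu=(t^*s)\odot_{dt}\sigma$ with $t^*s<t^*$, contradicting $t^*=\inf T(\mu)$; thus $\mu_0\in\widetilde{\SS}$ and $(t^*,\mu_0)$ maps to $\mu$.

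The only non-formal step is the attainment of the infimum $t^*$, that is, the closedness of $T(\mu)$ at its lower endpoint; injectivity, the interval structure of $T(\mu)$, and membership in $\widetilde{\SS}$ all follow mechanically from associativity, single-variable injectivity, and the definition of $\widetilde{\SS}$. Attainment rests on two analytic inputs specific to the Gaussian case: the a priori bound $t^*>0$, expressing that a nonzero finite measure cannot be deconvolved by Gaussians of arbitrarily large variance; and L\'evy's continuity theorem, which upgrades pointwise convergence of the deconvolved characteristic functions $\chi_{\rho_n}$ to weak convergence of the $\rho_n$ to a genuine measure $\rho^*\in\sM$. This is exactly the point where, for a general diffusion $P_t$ in place of Brownian motion, one would need extra structure on $P^*_{-\log t}$ (a non-vanishing symbol together with a suitable compactness/continuity statement).
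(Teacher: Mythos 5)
Your proof is correct, and its overall skeleton matches the paper's: injectivity is obtained from associativity of $\odot_{dt}$, injectivity of $\mu\mapsto\nu_t*\mu$ for fixed $t$, and the definition of $\widetilde{\SS}$; surjectivity is obtained by considering the set of admissible deconvolution times, showing its infimum $t^*$ is strictly positive and attained, and then that the attaining measure lies in $\widetilde{\SS}$. Where you genuinely differ is in how the two analytic steps of the surjectivity half are carried out. To rule out $t^*=0$, the paper works with densities: writing $\mu=\nu_\epsilon*\mu^{(\epsilon)}$, both measures have bounded densities $g$, $g^{(\epsilon)}$ with $\|g^{(\epsilon)}\|_1=1$, and $\|g\|_\infty\le\|f_\epsilon\|_\infty\to0$ as $\epsilon\to0$, a contradiction; you instead bound the characteristic function, $|\chi_\mu(\omega_0)|\le t^{|\omega_0|^2/2}\mu(\R^n)$ at some $\omega_0\neq\mathbf{0}$ where $\chi_\mu(\omega_0)\neq0$, which gives an explicit positive lower bound on the whole set $T(\mu)$ rather than a contradiction argument. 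For attainment, the paper proves that the deconvolved family $\{\mu^{(t)}\}$ is Cauchy in the Prokhorov metric via the estimate $\mu^{(t_2)}(A)<\epsilon+\mu^{(t_1)}(A^\epsilon)$ and uses completeness of that metric, whereas you observe that the deconvolved characteristic functions converge pointwise to a function continuous at the origin (here positivity of $t^*$ is essential, and you use it correctly) and invoke L\'evy's continuity theorem after normalising total masses. Both routes are sound; yours is shorter and isolates exactly which features of the Gaussian semigroup are used (non-vanishing characteristic function with explicit decay, plus the continuity theorem), which supports your closing remark about what would be needed to replace Brownian motion by a general diffusion, while the paper's argument avoids Fourier analysis entirely and stays in the weak topology in which $\sM$, $\widetilde{\SS}$ and $\odot_{dt}$ are naturally phrased.
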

\begin{proof}
  \textit{Injectivity:} First note that given $t\in (0,1)$ and
  $\mu,\rho\in\sM$ such that $\nu_t*\mu=\nu_t*\rho$, then $\mu=\rho$. This
  follows, for example, by noting that, for any $t$ in $(0,1]$, the
  moment generating function of $\nu_t$ is strictly positive on $\R^n$.
Then, given $t_1,t_2\in(0,1]$ and $\mu_0,\rho_0\in\widetilde{\SS}$
\begin{multline*}
  t_1\odot_{dt} \mu_0 =(t_1t_2)\odot_{dt} \rho_0 \Leftrightarrow
  t_1(\nu_{t_1}*\mu_0 ) =(t_1t_2)(\nu_{t_1t_2}*\rho_0 ) \Leftrightarrow\\
  \Leftrightarrow \nu_{t_1}*\mu_0 =\nu_{t_1}*\big(\nu_{t_2}*(t_2\rho_0
  )\big)
\Leftrightarrow\mu_0 =\nu_{t_2}*(t_2\rho_0 )\Leftrightarrow t_2=1\hbox{ and }\mu_0 =\rho_0 ,
\end{multline*}\
which means that the map $(t,\mu_0)\mapsto t\odot_{dt} \mu_0$ is injective.\\
\textit{Surjectivity:} Let $\mu\in\mathcal{M}\setminus\{0\}$.
Without loss of generality we can consider $\mu$
to be a probability measure, otherwise consider
$\frac{\mu}{\mu(\sX)}$. Define
\begin{displaymath}
  I_\mu:=\{t\in(0,1]:\hspace{1.5mm}\exists\rho\in\mathcal{M}
  \setminus\{0\},t\in(0,1]\hbox{ such that }t\odot_{dt} \rho=\mu\}.
\end{displaymath}
Note that $1\in I_\mu$ because $1\odot_{dt}\mu=\mu$
and, thanks to the associativity of $\odot_{dt}$,
$I_\mu$ is an interval. We define $t_0:=\inf
I_\mu$ and we prove $t_0>0$. Suppose $t_0=0$.
Then for any $\epsilon\in (0,1]$ there is $\rho^{(\epsilon )}\in\mathcal{M}
  \setminus\{0\}$ such that
  $\mu=\epsilon\nu_\epsilon*\rho^{(\epsilon )}$.
  Setting $\mu^{(\epsilon )}=\epsilon\rho^{(\epsilon )}$ we have $\mu=\nu_\epsilon*\mu^{(\epsilon )}$.
Since both $\mu$ and $\mu ^{(\epsilon)}$ by
associativity of $\odot_{dt}$ are obtained by convolution with some
$\nu_t$, they both admit bounded density functions, say, $g$ and
$g^{(\epsilon)}$, respectively.
Moreover $\| g^{(\epsilon)}\|_1=1$ because
\begin{displaymath}
1=\| g\|_1=
\| f_\epsilon *g^{(\epsilon )}\|_1
=\int_{\R^n}\int_{\R^n}f_\epsilon(x-y)g^{(\epsilon )}(y)dy\,dx
=\int_{\R^n}g^{(\epsilon )}(y)dy
=\| g^{(\epsilon )}\|_1\,,
\end{displaymath}
were we used the fact that both $f_\epsilon$ and $g^{(\epsilon )}$ are
positive, Fubini's theorem (which holds for positive functions) and
$\| \nu_\epsilon\|_1=1$ and. Then
\begin{displaymath}
\| g\|_\infty=
\| f_\epsilon *g^{(\epsilon )}\|_\infty \leq\| f_\epsilon\|_\infty\| g^{(\epsilon)}\|_1=
\| f_\epsilon\|_\infty\rightarrow 0\quad\hbox{as }\epsilon\rightarrow 0.
\end{displaymath}
Therefore, by contradiction, $t_0>0$.  Given $t\in(t_0,1]$ let
$\mu^{(t)}\in\mathcal{M}\setminus\{0\}$ such that
$\nu_t*\mu^{(t)}=\mu$.  We now prove that $\{\mu^{(t)}\}$ as
$t\downarrow t_0$ is Cauchy in the Prokhorov metric. Let $\epsilon>0$
be fixed and $\delta=\delta(\epsilon)>0$ to be fixed later. Consider
$t_1,t_2$ such that $t_0<t_1<t_2<t_0+\delta\leq 1$.  We have
$\mu^{(t_2)}=\nu_{t_1/t_2}*\mu^{(t_1)}$.  As $\delta\rightarrow 0$ we
have
$\nu_{t_1/ t_2}(\mathbb{R}^n\setminus B_\epsilon (0))\rightarrow
0$.
Thus we can choose $\delta$ such that
$\nu_{t_1/t_2}\left(\mathbb{R}^n\setminus B_\epsilon
  (0)\right)<\epsilon$.  Therefore we have
\begin{multline*}
\mu^{(t_2)}(A)=
\nu_{t_1/t_2}*\mu^{(t_1)}(A)=
\int_{\mathbb{R}^n}\nu_{t_1/t_2}(A-y)\, \mu^{(t_1)}(dy)=\\
=\int_{\mathbb{R}^n\setminus A^{\epsilon}}\nu_{t_1/t_2}(A-y)\,
\mu^{(t_1)}(dy)+ \int_{A^{\epsilon}}\nu_{t_1/t_2}(A-y)\,
\mu^{(t_1)}(dy)<\epsilon+\mu^{(t_1)}(A^\epsilon), 
\end{multline*}\
where $A-y$
is defined as $\{x\in\sX\,:\,x+y\in
A\}$ and $A^{\epsilon}=\{x\in\sX\,:\,B_\epsilon(x)\cap
A\neq\emptyset\}$.  Thus there exists a probability measure
$\mu^{(t_0)}\in\mathcal{M}\setminus\{0\}$
such that $\mu^{(t)}\Rightarrow\mu^{(t_0)}$
as $t\downarrow
t_0$ implying $\nu_t*\mu^{(t)}\Rightarrow
\nu_{t_0}*\mu^{(t_0)}$. 
Therefore $\nu_{t_0}*\mu^{(t_0)}=\mu$.
Finally we note that $\mu^{(t_0)}\in\widetilde{\SS}$
because of the definition of $t_0$.
\end{proof}

Thanks to Proposition \ref{prop:spectral_decomp}, for any $\mu$ in
$\sM\setminus\{0\}$ there is one and only one couple
$(t,\mu_0)\in (0,1]\times\widetilde{\SS}$ such that
$t\odot_{dt}\mu_0=\mu$, meaning that $\sM\setminus\{0\}$ can be
decomposed as
\begin{align}\label{eq:spectral_decomposition}
\sM\setminus\{0\}\;=&\;(0,1]\times\widetilde{\SS}.
\end{align}\
Any measure $\Lambda$ satisfying \eqref{eq:new_homogeneity} can then
be represented as $\Lambda=\theta_\alpha\otimes\sigma$, where
$\theta_\alpha\left((a,b]\right)=(b^{-\alpha}-a^{-\alpha})$ for any
$(a,b]\subseteq (0,1]$ and
$\sigma(A)=\Lambda\left((0,1]\times A\right)$ for any
$A\in\sB(\widetilde{\SS})$. In fact for any $(a,b]\subseteq (0,1]$ and
$A\in\sB(\widetilde{\SS})$
\begin{multline*}
\Lambda((a,b]\times A)=
\Lambda((0,b]\times A)-\Lambda((0,a]\times A)\stackrel{\eqref{eq:new_homogeneity}}=\\=
b^{-\alpha}\Lambda((0,1]\times A)-a^{-\alpha}\Lambda((0,1]\times A)=
\theta_\alpha((a,b])\;\sigma(A).
\end{multline*}\
Since $\theta_\alpha$ is fixed by $\alpha$, there is a one-to-one
correspondence between Levy measures satisfying
\eqref{eq:new_homogeneity} and spectral measures $\sigma$ on
$\widetilde{\SS}$. Thus a Cox point process driven by the parameter
measure $\xi$ is strictly $\alpha$-stable with respect to the
diffusion-thinning operation, if $\xi$ is $\odot_{dt}$-stable, which,
in turn, can be obtained by choosing an arbitrary spectral measure $\sigma$ on $\widetilde{\SS}$
satisfying
\begin{displaymath}
  \int_{\widetilde{\SS}} \mu(B)^\alpha \sigma(d\mu)<\infty
\end{displaymath}
for all any compact subsets $B\subset \R^n$. And then taking $\xi$
with the Laplace functional  
\begin{displaymath}
  L_\xi[u]=\exp\Bigl\{-\int_{\widetilde{\SS}}
  \langle u,\mu\rangle^\alpha \sigma(d\mu)\Bigr\}\,,
  \quad u\in\bmp.
\end{displaymath}

As an example, consider a finite measure $\hat{\sigma}$ on $\R^n$ and
$\sigma$ its push-forward under the map
$x\in\mathcal{X}\mapsto\delta_x\in\mathcal{M}$.  As shown above, the
homogeneous Levy measure on $\mathcal{M}=(0,1]\times\widetilde{\SS}$
having $\sigma$ as spectral measure is
$\Lambda=\theta_\alpha\otimes\sigma$. Therefore $\Lambda$ is supported
by the following subset of $\mathcal{M}$:
\begin{displaymath}
  \mathcal{Y}:=\{\nu_t(\,\cdot-m):\hspace{1.5mm}t\in(0,1],m\in\mathbb{R}^n\}\subset \mathcal{M}.
\end{displaymath}

\subsection{Stability on $\ZZ_+$ and $\RR_+$}
In all the examples considered so far: thinning stability,
$\mathcal{F}$-stability and an example of a general branching
stability, the operation $\bullet_{dt}$ in Section
\ref{sec:thinning_diffusion}, the corresponding stable point
processes were Cox processes driven by a parameter measure which is
itself stable with respect to a corresponding operation on
measures. Unlike its counterpart operation on point processes, this operation
 was not stochastic, meaning that its result on a deterministic
measure is also deterministic. For the case of thinning stability, this
was an operation of ordinary multiplication, for a point-processes
branching operation $\bullet_{dt}$, this was the operation $\odot_{dt}$.
Nevertheless this does not need to be the case in general: in this
section we consider point-processes branching operations $\bullet$
whose corresponding measure branching operation $\odot$ is
also stochastic.

We consider the case of a trivial phase space $\mathcal{X}$ consisting of one
point, or in other words the case of random variables taking values in $\ZZ_+$ and $\RR_+$.
For $\ZZ_+$, since the phase space consists of one point, the general branching 
stability corresponds to
the $\mathcal{F}$-stability described in Section \ref{sec:das0}. We
introduce the notion of branching stability in $\RR_+$ using the
theory of continuous-state branching processes (CB-processes, see
\cite{Lamperti:67}). We show that, at least in the cases we consider,
branching stable (or $\mathcal{F}$-stable) discrete random variables
are Cox processes driven by a branching stable continuous random
variable. Finally we show how to use quasi-stationary distributions
to construct branching stable continuous random variables.

We argue that the theory of superprocesses (e.g.\ \cite{Etheridge:00})
should be relevant to extend the ideas presented in this section
to general branching stable point processes.

\paragraph{Continuous-state branching processes}
Continuous-state branching processes were first considered in
\cite{Jivrina:58} and \cite{Lamperti:67}, and can be thought of as an
analogue of continuous time branching processes on $\ZZ_+$ on a
continuous space $\RR_+$.
\begin{definition}
  A continuous-state branching process (CB-process) is a Markov
  process $(Z^x_t)_{x,t\geq 0}$ on $\RR_+$, where $t$ denotes time and
  $x$ the starting state, with transition probabilities
  $\left(P_t^x\right)_{t,x\geq 0}$ satisfying the following
  \emph{branching property}:
\begin{equation}\label{eq:branching_property}
P_t^{x+y}\;=\;P_t^x* P_t^y,
\end{equation}\
for any $t,x,y\geq 0$, where $*$ denotes convolution.
\end{definition}
A useful tool to study CB-processes is the spatial Laplace transform $V_t$, defined by
\begin{equation}\label{eq:spatial_laplace_transform}
x\,V_t(z)\quad=\quad -\log\int_{\RR_+}e^{-z\,y}P_t^x(dy)\qquad z\geq 0,
\end{equation}\
for $t\geq0$ and $x>0$. The value of $x$ in
\eqref{eq:spatial_laplace_transform} is irrelevant because of the
branching property, it could be simply set to
1. Using the Chapman-Kolmogorov equations it follows from
\eqref{eq:spatial_laplace_transform} that $(V_t)_{t\in\RR_+}$ is a
composition semigroup
\begin{equation}\label{eq:cb_process_composition}\tag{C1$'$}
V_t(V_s(z))\quad=\quad V_{t+s}(z),\qquad s,t,z\geq 0.
\end{equation}\
Similarly to the discrete case in Section \ref{sec:brachn-proc-refr}, we
focus in the subcritical case, $\EE[Z^1_t]<1$, and we assume
regularity conditions analogous to
\eqref{eq:subcriticality}-\eqref{second_condition_br_proc}. More
specifically, rescaling the time by a constant factor if necessary, we
may assume that
\begin{align}
\EE[Z^1_t]&\;=\;e^{-t}\,,\label{eq:cb_process_subcritical}\tag{C2$'$}\\
\lim_{t\downarrow 0}V_t(z)&\;=\;V_0(z)\;=\;z\,,\label{first_condition_cb_proc}\tag{C3$'$}\\
\lim_{t\rightarrow \infty}V_t(z)&\;=\;0\,.\label{second_condition_cb_proc}\tag{C4$'$}
\end{align}\
\begin{example}\label{ex:cb_process}
  A well known example of CB-process is the diffusion process with
  Kolmogorov backward equations given by
  \begin{displaymath}
\frac{\partial u}{\partial t}\;=\;a x\frac{\partial u}{\partial
  x}+\frac{b x}{2} \frac{\partial^2 u}{\partial x^2}.
\end{displaymath}
The spatial Laplace transform of the corresponding CB-process is
\begin{equation}\label{eq:cb_proc_example}
    V_t(z)\;=\left\{
  \begin{array}{ll}
    \frac{z\,\exp(a t)}{1-(1-\exp(a t))\frac{b z}{2a}}, & \hbox{if }a\neq 0,\\
\frac{z}{1+t\frac{b z}{2}}, & \hbox{if }a =0.
  \end{array}
\right.
\end{equation}\
The sub-critical case corresponds to $a<0$. Rescaling time to satisfy
\eqref{eq:cb_process_subcritical} corresponds to setting $a=-1$.
\end{example}

The \emph{limiting conditional distribution} (or Yaglom distribution)
of $(Z^x_t)_{x,t\geq 0}$ is the weak limit of $(Z^x_t|Z^x_t>0)$, when
$t\rightarrow+\infty$. Such limit does not depend on $x$ (e.g.\
\cite[Th.~3.1]{Lambert2007} or \cite[Th.~4.3]{Li2000}) and we denote
by $Z_\infty$ the corresponding random variable and by $L_{Z_\infty}$
its Laplace transform. The Yaglom distribution is also a
quasi-stationary distribution, meaning that
$\left(Z^{Z_\infty}_t|Z^{Z_\infty}_t>0\right)\stackrel{\mathcal{D}}=Z_\infty$
(e.g.\ \cite[Th.~3.1]{Lambert2007}). Given
\eqref{eq:cb_process_subcritical} and the quasi-stationarity of
$Z_\infty$ it follows that
\begin{equation}\label{eq:cb_process_quasi_stationary}
  L_{Z_\infty}\big(V_t(z)\big)=1-e^{-t}+e^{-t}L_{Z_\infty}(z),\qquad s,z\geq 0\,.
\end{equation}\

\paragraph{$\sV$-stability and Cox characterisation of
  $\mathcal{F}$-stable random variables}
\label{sec:V_stability}
Let $(Z^x_t)_{x,t\geq 0}$ be a CB-process with spatial Laplace
transform $\sV=(V_t)_{t\geq 0}$, satisfying assumptions
\eqref{eq:cb_process_composition}-\eqref{second_condition_cb_proc} of
the previous section. Define a corresponding stochastic operation
acting on random variables on $\RR_+$ as follows:
\begin{equation}\label{eq:v_multiplication}
t\odot_\sV \xi \;\stackrel{\mathcal{D}}=\;Z^\xi_{-\ln t}\qquad 0< z\leq 1,
\end{equation}\
where $\xi$ is an $\RR_+$-valued random variable and $Z^\xi_t$ is the
CB-process with random starting state $\xi$.  Similarly to Proposition
\ref{maximality_of_the_operation}, from the Markov and branching
properties of $(Z^x_t)_{x,t\geq 0}$ it follows that $\odot_\sV$ is
associative and distributive with respect to the usual sum.

The notion of $\sV$-stability for continuous random variables is
analogous to the notion of $\sF$-stability for discrete frameworks:
\begin{definition}
  A $\RR_+$-valued random variable $X$ (or its distribution) is
  $\sV$-stable with exponent $\alpha$ if
\begin{equation}\label{eq:V_stability}
  t^{1/\alpha}\odot_\sV X'+(1-t)^{1/\alpha}\odot_\sV X''\;\stackrel{\mathcal{D}}=\;X\qquad 0< t< 1,
\end{equation}\
where $X'$ and $X''$ are independent copies of $X$.
\end{definition}

In terms of Laplace transform $L$, the definition of $\odot_\sV$ in
\eqref{eq:v_multiplication} can be written as
$L_{t\odot_\sV X}(z)=L_X(V_{-\ln t}(z))$. Thus, arguing as in
Proposition~\ref{definitions_of_stability}, \eqref{eq:V_stability}
is equivalent to
\begin{equation}\label{eq:V_stability_laplace}
L_X(V_{-\ln t}(z))\;=\;L_X(z)^{t^{\alpha}}
\qquad 0< t< 1.
\end{equation}
Suppose we have a continuous-time branching process on $\ZZ_+$ with
p.g.f.'s $\sF=(F_t)_{t\geq 0}$ and a CB-process with spatial Laplace
transform $\sV=(V_t)_{t\geq 0}$ such that
\begin{equation}\label{eq:discrete_continuous}
F_t(z)\;=\; 1-V_t(1-z)\qquad 0\leq z\leq 1.
\end{equation}\
The relation between the discrete and continuous stochastic
operations, $\circ_\sF$ and $\odot_\sV$, is that
Cox random variables 
driven by a $\sV$-stable random intensity are $\sF$-stable.
\begin{proposition}\label{prop:cox_characterization_f_stability}
  Let $\sF=(F_t)_{t\geq 0}$ and $\sV=(V_t)_{t\geq 0}$ satisfy
  \eqref{semigroup_property}-\eqref{second_condition_br_proc} and
  \eqref{eq:cb_process_composition}-\eqref{second_condition_cb_proc}
  respectively, and let \eqref{eq:discrete_continuous} be satisfied.
  Let $\xi$ be a $\sV$-stable random variable with exponent $\alpha$
  and let $X$ be a Cox random variable driven by $\xi$, meaning that \
  $X|\xi\sim Po(\xi)$. Then $X$ is $\sF$-stable with exponent
  $\alpha$.
\end{proposition}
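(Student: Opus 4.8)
The plan is to work entirely at the level of transforms and reduce the claim to the $\sV$-stability relation \eqref{eq:V_stability_laplace} satisfied by $\xi$. First I would observe that, since $X\mid\xi\sim\mathrm{Po}(\xi)$, the probability generating function of the Cox random variable $X$ is
\begin{equation*}
  G_X(z)=\E[z^X]=\E\big[\E[z^X\mid\xi]\big]=\E\big[e^{-\xi(1-z)}\big]=L_\xi(1-z),\qquad 0\le z\le1,
\end{equation*}
where $L_\xi$ is the Laplace transform of $\xi$. This identity $G_X(z)=L_\xi(1-z)$ is the only probabilistic input; everything else is algebra with the two semigroups.

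Next I would fix the target. Arguing as in the pgf reformulation immediately following Definition~\ref{defi:f_stability_rv}, $X$ is $\sF$-stable with exponent $\alpha$ as soon as its generating function satisfies the single-parameter identity
\begin{equation*}
  G_X\big(F_{-\ln t}(z)\big)=G_X(z)^{t^\alpha},\qquad 0<t<1,\ 0\le z\le1,
\end{equation*}
since substituting $t\mapsto t^{1/\alpha}$ and $t\mapsto(1-t)^{1/\alpha}$ and multiplying the two resulting relations recovers exactly the defining identity \eqref{F_stable_rv_no_pgf}. (Equivalently, this is the $\ZZ_+$-instance, via \eqref{pgf_of_F_operation}, of condition~4 in Proposition~\ref{definitions_of_stability}.) It therefore suffices to establish this single-parameter identity.

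The verification is then a short chain of substitutions. Using $G_X(\cdot)=L_\xi(1-\cdot)$ together with the bridge \eqref{eq:discrete_continuous}, which rearranges to $1-F_s(z)=V_s(1-z)$, I would write
\begin{equation*}
  G_X\big(F_{-\ln t}(z)\big)=L_\xi\big(1-F_{-\ln t}(z)\big)=L_\xi\big(V_{-\ln t}(1-z)\big).
\end{equation*}
Since $\xi$ is $\sV$-stable with exponent $\alpha$, relation \eqref{eq:V_stability_laplace} evaluated at the point $1-z$ gives $L_\xi\big(V_{-\ln t}(1-z)\big)=L_\xi(1-z)^{t^\alpha}=G_X(z)^{t^\alpha}$, which is precisely the target identity.

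There is essentially no analytic difficulty here: once the Cox pgf is expressed through $L_\xi$ and the two semigroups are linked by \eqref{eq:discrete_continuous}, the conclusion is a direct substitution. The only steps that warrant a moment's care are the bookkeeping of the equivalent stability formulations---confirming that the single-parameter identity is genuinely equivalent to the two-parameter definition \eqref{F_stable_rv_no_pgf}---and checking that the substitution $z\mapsto 1-z$ keeps every argument inside its admissible range, so that $L_\xi$ is evaluated on $[0,1]\subset\RR_+$ and $V_{-\ln t}$ on $[0,1]$. Both are routine.
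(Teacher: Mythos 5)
Your proof is correct and follows essentially the same route as the paper's: express the Cox p.g.f.\ as $G_X(z)=L_\xi(1-z)$, use the bridge \eqref{eq:discrete_continuous} to convert $F_{-\ln t}$ into $V_{-\ln t}$, and invoke the $\sV$-stability relation \eqref{eq:V_stability_laplace} at the point $1-z$. The only difference is that you spell out explicitly the reduction of the two-parameter definition \eqref{F_stable_rv_no_pgf} to the single-parameter identity $G_X(F_{-\ln t}(z))=G_X(z)^{t^\alpha}$, which the paper leaves implicit.
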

\begin{proof}
The pg.f.\ of $X$ is given by $G_X(z)=L_\xi(1-z)$, see e.g.\
\eqref{eq:pgfl_cox}. Therefore 
\begin{equation*}
G_X(F_{-\ln t}(z))
=
L_\xi(1-(F_{-\ln t}(z)))
\stackrel{\eqref{eq:discrete_continuous}}=
L_\xi(V_{-\ln t}(1-z))
=
L_\xi(1-z)^{t^\alpha}
=
G_X(z)^{t^\alpha},
\end{equation*}\
which implies that $X$ is $\sF$-stable with exponent $\alpha$.
\end{proof}

Examples of discrete and continuous operations, $\circ_\sF$ and
$\odot_\sV$, with $\sF$ and $\sV$ coupled by
\eqref{eq:discrete_continuous} are thinning and multiplication, as
well as the birth and death process of Example~\ref{ex:branching_geom} and
the CB-process of Example~\ref{ex:cb_process} (with $b=1$). Also the
operations $\bullet_{dt}$ and $\odot_{dt}$ of
Section~\ref{sec:thinning_diffusion}, in a point process and random
measures framework, satisfy \eqref{eq:discrete_continuous}. A natural
question is whether for any continuous time branching process on
$\ZZ_+$ there is a CB-process such that \eqref{eq:discrete_continuous}
is satisfied and viceversa.
Note that, given \eqref{eq:discrete_continuous}, $(F_t)_{t\geq 0}$ is
a composition semigroup if and only if $(V_t)_{t\geq 0}$ is. Indeed,
\begin{displaymath}
F_t(F_s(z))\;=\; 1-V_t(1-(1-V_s(1-z)))\;=\; 1-V_t(V_s(1-z))\;=\; 1-V_{t+s}(1-z)=F_{t+s}(z),
\end{displaymath}
and similarly for $V_t$. Therefore one would only need to prove that if
$V_t$ is the spatial Laplace transform of a random variable on $\R_+$
then $F_t$ defined by \eqref{eq:discrete_continuous} is the p.g.f.\ of
a random variable on $\ZZ_+$ or viceversa.

Finally, note that Proposition
\ref{prop:cox_characterization_f_stability} suggests that, at least in
some cases, $\sF$-stable random variables on $\ZZ_+$ are Cox processes
driven by $\sV$-stable random variables on $\RR_+$. It is therefore
natural to ask whether we can characterise $\sV$-stable random
variables.

Equations \eqref{eq:cb_process_quasi_stationary} and
\eqref{eq:v_multiplication} imply that
\begin{equation}\label{eq:cb_proc_vs_thinning}
  t\odot_\sV Z_\infty\deq \sum_{i=0}^{t\circ 1}\,Z_\infty\qquad t\in(0,1],
\end{equation}\
where $t\circ 1$ is, by the definition of thinning, a binomial
$\Bin(1,t)$ random variable (independent of $Z_\infty$). Therefore the
Yaglom distribution allows us to pass from $\odot_\sV$ to thinning and
use such a property to construct $\sV$-stable random variables
from $\das$ random variables.

\begin{proposition}
  Let $X$ be a $\das$ random variable on $\ZZ_+$ (see \eqref{das0}),
  and $\xi=\sum_{i=1}^{X}Z_{\infty}^{(i)}$, where
  $Z_{\infty}^{(1)},Z_{\infty}^{(2)},\dots$ are i.i.d.\ copies of the
  Yaglom distribution $Z_\infty$. Then $\xi$ is $\sV$-stable with
  exponent $\alpha$.
\end{proposition}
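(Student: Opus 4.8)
The plan is to reduce the continuous operation $\odot_\sV$ acting on $\xi$ to ordinary thinning acting on the number $X$ of $Z_\infty$-blocks, and then to invoke the $\das$ property of $X$. The key ingredient is the single-block identity \eqref{eq:cb_proc_vs_thinning}, which says that $t\odot_\sV Z_\infty$ is a fresh copy of $Z_\infty$ with probability $t$ and is $0$ otherwise, i.e.\ it survives according to a $\Bin(1,t)$ indicator.

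First I would exploit the distributivity of $\odot_\sV$ over the usual sum (established from the branching property of the CB-process just before the definition of $\sV$-stability) by conditioning on $X$. On the event $X=n$, distributivity gives $t\odot_\sV\sum_{i=1}^n Z_\infty^{(i)}\deq\sum_{i=1}^n t\odot_\sV Z_\infty^{(i)}$, and by \eqref{eq:cb_proc_vs_thinning} each summand survives independently as an i.i.d.\ copy of $Z_\infty$ with probability $t$. Hence the number of surviving blocks is $\Bin(n,t)$, which is precisely $t\circ n$ in the thinning notation of \eqref{eq:dthin}. Removing the conditioning yields the structural identity
\[
  t\odot_\sV\xi\;\deq\;\sum_{k=1}^{t\circ X}Z_\infty^{(k)},
\]
where the $Z_\infty^{(k)}$ are i.i.d.\ copies of the Yaglom variable and $t\circ X$ is the thinning of $X$.

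With this in hand the stability identity \eqref{eq:V_stability} is immediate. Applying the displayed identity to the two scaled terms and superposing them, the total number of $Z_\infty$-blocks becomes $t^{1/\alpha}\circ X'+(1-t)^{1/\alpha}\circ X''$, which equals $X$ in distribution because $X$ is $\das$, see \eqref{das0}. Therefore $t^{1/\alpha}\odot_\sV\xi'+(1-t)^{1/\alpha}\odot_\sV\xi''\deq\sum_{k=1}^X Z_\infty^{(k)}=\xi$, as required.

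For a purely computational verification that sidesteps the distributivity-over-a-random-sum bookkeeping, I would instead compute the Laplace transform of $\xi$ directly. Conditioning on $X$ gives $L_\xi(z)=G_X\big(L_{Z_\infty}(z)\big)$, and by the $\das$ characterisation in Theorem~\ref{th:charFstabrv} one has $G_X(z)=\exp\{-c(1-z)^\alpha\}$ for some $c>0$ and $0<\alpha\le1$. Using the quasi-stationarity relation \eqref{eq:cb_process_quasi_stationary} with $s=-\ln t$, namely $L_{Z_\infty}(V_{-\ln t}(z))=1-t+t\,L_{Z_\infty}(z)$, a one-line substitution gives $L_\xi(V_{-\ln t}(z))=\exp\{-c\,t^\alpha(1-L_{Z_\infty}(z))^\alpha\}=L_\xi(z)^{t^\alpha}$, which is exactly the Laplace-transform form \eqref{eq:V_stability_laplace} of $\sV$-stability. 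The only delicate point in the probabilistic argument is the identification of the surviving-block count with the thinning $t\circ X$; this is routine given that the per-block $\Bin(1,t)$ survival indicators are independent of $X$ and of the blocks themselves, and the Laplace-transform route avoids it entirely.
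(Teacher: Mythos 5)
Your proposal is correct, and it in fact contains two proofs. The second (Laplace-transform) verification is essentially the paper's own argument: the paper likewise writes $L_\xi(z)=G_X\bigl(L_{Z_\infty}(z)\bigr)$, applies the quasi-stationarity relation \eqref{eq:cb_process_quasi_stationary} to obtain $G_X\bigl(1-t+tL_{Z_\infty}(z)\bigr)=G_{t\circ X}\bigl(L_{Z_\infty}(z)\bigr)$, and concludes $L_\xi(V_{-\ln t}(z))=L_\xi(z)^{t^\alpha}$, i.e.\ \eqref{eq:V_stability_laplace}; the only difference is that where you substitute the explicit $\das$ form $G_X(z)=\exp\{-c(1-z)^\alpha\}$ from Theorem~\ref{th:charFstabrv}, the paper invokes the abstract thinning-stability identity \eqref{defi_stability4}, $G_{t\circ X}(w)=G_X(w)^{t^\alpha}$ --- both steps are legitimate, the paper's being marginally more economical since it does not need the characterisation theorem. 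Your first, probabilistic argument is a genuinely different route: it upgrades the single-block identity \eqref{eq:cb_proc_vs_thinning} to the structural identity $t\odot_\sV\xi\deq\sum_{k=1}^{t\circ X}Z_\infty^{(k)}$ via the distributivity of $\odot_\sV$ (conditioning on $X$), and then verifies the defining distributional identity \eqref{eq:V_stability} directly, the $\das$ property of $X$ entering as $t^{1/\alpha}\circ X'+(1-t)^{1/\alpha}\circ X''\deq X$, see \eqref{das0}. This buys conceptual transparency --- it exhibits exactly how the Yaglom blocks convert the continuous branching operation into thinning of the block count, which is the heuristic behind the proposition --- and it uses neither Laplace transforms nor any characterisation theorem, at the price of some care with independence in the random-sum bookkeeping (which you handle correctly: the survival indicators and the fresh copies of $Z_\infty$ are independent of $X$ and of one another).
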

\begin{proof}
  Given its definition, the Laplace transform of $\xi$ is given by
  $L_\xi(z)=G_X\left(L_{Z_\infty}(z)\right)$. Therefore
\begin{multline*}
L_\xi(V_{-\ln t}(z))
\;=\;
G_X\left(L_{Z_\infty}(V_{-\ln t}(z))\right)
\;\stackrel{\eqref{eq:cb_process_quasi_stationary}}=\;
G_X\left(1-t+tL_{Z_\infty}(z)\right)
\;=\\
G_{t\circ X}\left(L_{Z_\infty}(z)\right)
\;\stackrel{\eqref{defi_stability4}}=\;
G_X\left(L_{Z_\infty}(z)\right)^{t^{\alpha}}
\;=\;
L_\xi(z)^{t^{\alpha}},
\end{multline*}
which implies that $\xi$ if $\sV$-stable.
\end{proof}

\section{Discussion}
In this paper we have studied discrete stability with respect to the
most general branching operation on counting measures which unifies
all notions considered so far: discrete stable and $\mathcal{F}$-stable
integer random variables, thinning-stable and $\mathcal{F}$-stable
point processes characterised above. We considered in detail an
important example of thinning-diffusion branching stable point
processes and established the corresponding spectral representation of
their laws. We demonstrate that branching stability of integer random
variables may be associated with a stability with respect to a
stochastic operation of continuous branching on the positive real line and we
conjecture that this association may still be true in general for
point processes and its continuous counterpart, random measures. A
full characterisation of the branching-stable point processes, as well
as of the associated stable random measures is yet to be established. 

\section*{Acknowledgement}
\label{sec:acknowledgement}
The authors are thankful to Ilya Molchanov for fruitful
discussions and to the anonymous referee for thorough reading of the
manuscript and numerous suggestions which significantly improved
its exposition. SZ also thanks Serik Sagitov and Peter
Jagers for consultations on advanced topics in branching processes.


\providecommand{\bysame}{\leavevmode\hbox to3em{\hrulefill}\thinspace}
\providecommand{\MR}{\relax\ifhmode\unskip\space\fi MR }
\providecommand{\MRhref}[2]{%
  \href{http://www.ams.org/mathscinet-getitem?mr=#1}{#2}
}
\providecommand{\href}[2]{#2}

\end{document}